\newtheorem{thm}{Theorem}
\newtheorem{cor}[thm]{Corollary}
\newtheorem{prop}[thm]{Proposition}
\newtheorem{theorem}[thm]{Theorem}
\newtheorem{lemma}[thm]{Lemma}
\theoremstyle{definition}
\newtheorem{remark}[thm]{Remark}
\newtheorem*{question}{Question}
\newcommand{\CPb}{\overline{\mathbb{CP}}{}^{2}}
\newcommand{\CP}{{\mathbb{CP}}{}^{2}}
\newcommand{\RP}{{\mathbb{RP}}{}^{2}}
\newcommand{\K}{\textit{KB}}
\def \x {\times}
\begin{document}

\title[Knotted surfaces in $4$-manifolds and stabilizations]
{Knotted surfaces in $4$-manifolds \\and stabilizations}

\author[R. \.{I}. Baykur]{R. \.{I}nan\c{c} Baykur}
\address{Department of Mathematics and Statistics, University of Massachusetts, Amherst, MA 01003}
\email{baykur@math.umass.edu}

\author[N. Sunukjian]{Nathan Sunukjian}
\address{Department of Mathematics and Statistics, University of Massachusetts, Amherst, MA 01003}
\email{nsunukjian@math.umass.edu}

\begin{abstract}
In this paper, we study stable equivalence of exotically knotted surfaces in $4$-manifolds, surfaces that are topologically isotopic but not smoothly isotopic. We prove that any pair of embedded surfaces in the same homology class become smoothly isotopic after stabilizing them by handle additions in the ambient $4$-manifold, which can moreover assumed to be attached in a standard way (locally and unknottedly)  in many favorable situations. In particular, any exotically knotted pair of surfaces with cyclic fundamental group complements become smoothly isotopic after a same number of standard stabilizations --analogous to C.T.C. Wall's celebrated result on the stable equivalence of simply-connected $4$-manifolds. We moreover show that all constructions of exotic knottings of surfaces we are aware of, which display a good variety of techniques and ideas, produce surfaces that become smoothly isotopic after a single stabilization.
\end{abstract}

\maketitle

\vspace{0.2in}
% =============================================================================
\section{Introduction}
% =============================================================================

A pair of embedded surfaces in a $4$-manifold are said to be \emph{exotically knotted} if they are topologically isotopic, but not smoothly isotopic. Since topologically isotopic surfaces have the same topology (orientability and genus) and represent the same homology class, exotic knottings are more rigid examples of \emph{exotic embeddings} of surfaces, where the surfaces are ambiently homeomorphic but not diffeomorphic. 

Since the advent of gauge theory, many infinite families of exotically knotted and exotically embedded surfaces in closed oriented $4$-manifolds have been produced; see e.g. \cite{AKMR, FKV, Finashin1, Finashin2, Finashin3, FintushelStern1, FintushelStern2, HS, Kim, KimRuberman1, KimRuberman2, KimRuberman3, Mark}. Notably, the phenomenon of \textit{infinite} exotic knottings is unique to dimension $4$. These diverse examples owe much to the ingenious construction techniques introduced by many authors, which make it possible to simultaneously control the underlying algebraic topology so as to invoke Freedman's theory, and control the smooth topology to be able to calculate Donaldson and Seiberg-Witten invariants in order to obstruct smooth equivalence. Prompted by the wealth of these examples, our goal in this article is to develop an appropriate notion of \textit{stable equivalence for knotted surfaces} and analyze exotic knottings under this equivalence. By a \emph{stabilization} of an embedded surface, we will simply mean attaching an embedded handle to the surface, increasing its genus. A \emph{standard stabilization} will then refer to adding an unknotted handle attached locally (i.e. by taking internal connected sum with a trivial $2$-torus in a small ball neighborhood of a point on the surface). We have the following general result:

\begin{thm} \label{mainthm1}
Any pair of homologous embedded surfaces $\Sigma_i$, $i=0,1$ in a compact oriented $4$-manifold $X$ become smoothly isotopic after enough stabilizations. If 
$\pi_1(\partial \, (\nu \Sigma_i) )$ surjects on to $\pi_1(X \setminus \Sigma_i)$ under the inclusion homomorphism, then these handles can be attached in a standard way. In particular, if $\Sigma_i$ are exotically knotted surfaces in $X$ with cyclic fundamental group complements, then they become smoothly isotopic after the same number of standard stabilizations. 
\end{thm}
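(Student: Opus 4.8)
The plan is to deduce everything from a single embedded $3$-dimensional cobordism in $X\times[0,1]$, simplified by handle moves, with the fundamental-group hypothesis entering only to make the simplifying handles local. First I would construct an embedded, connected $3$-manifold $W\subset X\times[0,1]$ with $\partial W=(\Sigma_0\times\{0\})\sqcup(\Sigma_1\times\{1\})$ and product near the collars. Since $\Sigma_0$ and $\Sigma_1$ are homologous, $\Sigma_0\times\{0\}$ is homologous to $\Sigma_1\times\{1\}$ in $X\times I$, so they cobound a compact oriented $3$-manifold mapped to the $5$-manifold $X\times I$, embedded near its boundary and generic in the interior; the resulting $1$-dimensional self-intersection set lies in $X\times(0,1)$ and is removed by the standard interior surgeries (resolving the double curves, i.e.\ tubing apart the colliding sheets) without disturbing $\partial W$, and tubing components together by interior $1$-handles makes $W$ connected. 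When $\Sigma_0,\Sigma_1$ are connected, handle trading lets me assume that $W$, built as a cobordism rel $\Sigma_0\times\{0\}$, has no handles of index $0$ or $3$; the disconnected case is only cosmetically different.

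Next I would reorder the handle decomposition of $W$ so that all $1$-handles precede all $2$-handles and isolate the middle level $\widehat\Sigma$. Viewed from the $\Sigma_0$ end, each $1$-handle attaches a tube to the current level surface, so $\widehat\Sigma$ is obtained from $\Sigma_0$ by $k$ genus-raising tube additions; viewed from the $\Sigma_1$ end, the $2$-handles of $W$ are $1$-handles turning $\Sigma_1$ into that same $\widehat\Sigma$ after $k'$ tube additions. Since the two halves of $W$ are glued along $\widehat\Sigma$, this already proves the first assertion: $\Sigma_0$ stabilized $k$ times is smoothly isotopic in $X$ to $\Sigma_1$ stabilized $k'$ times --- with tubes that may a priori be knotted and non-local. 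Each tube is recorded by a framed arc with endpoints on $\Sigma_0$ (resp.\ $\Sigma_1$), interior disjoint from the surface, and these arcs pairwise disjoint; a tube along an arc that is isotopic rel endpoints to a trivial arc inside a ball, carrying the orientable framing, is exactly a standard stabilization.

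The heart of the matter, and the step I expect to be the main obstacle, is standardizing these arcs when $\pi_1(\partial(\nu\Sigma_0))$ surjects onto $\pi_1(X\setminus\Sigma_0)$. Sliding the foot of a tube along $\Sigma_0$ changes the homotopy class of its core arc --- endpoints pushed onto $\partial(\nu\Sigma_0)$ --- precisely by elements of the image of $\pi_1(\partial(\nu\Sigma_0))$ in $\pi_1(X\setminus\Sigma_0)$, so the surjectivity hypothesis makes every such arc null-homotopic rel its endpoints. An arc has codimension $\ge 3$ in $X$, so a null-homotopy rel endpoints upgrades to an ambient isotopy pushing the arc into a ball meeting $\Sigma_0$ in a disk; staying disjoint from $\Sigma_0$ and from the other arcs is automatic for dimension reasons, and the residual framing ambiguity --- a $\Z/2$ from $\pi_1(SO(3))$ --- is absorbed by spinning the tube inside the $4$-manifold. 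Thus each tube becomes a local unknotted handle, and running the same argument from the $\Sigma_1$ end makes all the stabilizations standard, which is the second assertion.

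Finally, for the stated consequence: exotically knotted surfaces are topologically isotopic, hence have equal genus $g$ and homeomorphic, so isomorphic, complementary fundamental groups, cyclic by hypothesis. A cyclic $\pi_1(X\setminus\Sigma_i)$ is generated by a meridian of $\Sigma_i$ (immediate when $X$ is simply connected, which covers the known examples), and a meridian is the image of the $S^1$-fiber of $\partial(\nu\Sigma_i)$, so $\pi_1(\partial(\nu\Sigma_i))\to\pi_1(X\setminus\Sigma_i)$ is onto and the second assertion applies to both surfaces. It produces a standard stabilization of $\Sigma_0$ by $a$ handles and of $\Sigma_1$ by $b$ handles that are smoothly isotopic, hence diffeomorphic as abstract surfaces, so $g+a=g+b$, i.e.\ $a=b$. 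Therefore $\Sigma_0$ and $\Sigma_1$ become smoothly isotopic after the same number of standard stabilizations.
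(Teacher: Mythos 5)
Your route is genuinely different from the paper's in one respect, and that part is sound: by working with an embedded cobordism $W\subset X\times[0,1]$ rather than a Seifert $3$-manifold inside $X$ itself, you sidestep the paper's main technical headache, namely that homologous surfaces of nonzero square must intersect in $X$, which forces the paper to build a \emph{relative} Seifert manifold with corners glued up along Hopf-link annuli around the intersection points. Your movie argument (project $W$ to $[0,1]$, trade away index $0$ and $3$ critical points, push $1$-handles below $2$-handles, compare the two descriptions of the middle level) is the standard and correct way to extract stable isotopy from such a cobordism, and your standardization step via the surjectivity of $\pi_1(\partial(\nu\Sigma_i))\to\pi_1(X\setminus\Sigma_i)$ is in substance the paper's Lemma~\ref{l:mainLemma} (Boyle/Kamada: a handle is determined by its cord up to adding meridians and sliding feet around $\Sigma$). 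I would only press you to justify the existence of the embedded $W$ rel boundary more carefully: ``resolve the double curves'' of a generic map of a $3$-manifold into a $5$-manifold is not automatic along closed double circles; the clean construction is the codimension-two Poincar\'e duality one (a transverse zero set of a section of the dual line bundle, or equivalently the preimage construction the paper runs with a map to $S^1$).

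The genuine gaps are in the non-orientable case and in your treatment of framings, and they are not cosmetic because most of the exotically knotted surfaces the theorem is aimed at (Finashin--Kreck--Viro, Finashin) are non-orientable. First, for non-orientable $\Sigma_i$ the cobordism $W$ you want need not exist from the homology hypothesis alone: one also needs the normal Euler numbers to agree (the paper's Theorem~\ref{ThmStabNonorient} adds this hypothesis, and Remark~\ref{rmk} following it explains why it is necessary; for topologically isotopic surfaces it holds automatically, which is why the corollary goes through). Your construction of $W$ is run with $\Z$-coefficients and orientations throughout, and the non-orientable version requires the $\Z_2$-coefficient argument with a map to $\mathbb{RP}^\infty$ and a section of the circle bundle $\partial(\nu\Sigma)$. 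Second, your claim that the residual $\Z/2$ framing ambiguity ``is absorbed by spinning the tube'' is false as stated: a twisted versus untwisted trivial handle on an orientable surface produces a non-orientable versus orientable result, so the two are never isotopic. In the orientable case you are saved because $W$ is oriented, hence every tube it produces is automatically untwisted --- but you should say that, not appeal to absorbing the framing. In the non-orientable case $W$ is non-orientable, the tubes it produces genuinely may be twisted, and converting them to untwisted standard handles requires an additional hypothesis and argument (the paper's Lemma~\ref{l:mainLemma2}: an orientation-reversing loop on $\Sigma$ with null-homotopic push-off lets one reverse the framing of a cord). Without these two ingredients your proof covers only the orientable half of the statement.
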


\noindent In the case of knotted $2$-spheres in $S^4$, the first result is due to Hosokawa and Kawauchi \cite{HK}. Our proof of this general theorem rests on rather classical differential topology and surgery theory arguments, drawing ideas from \cite{B, GL, HK, K, K2, Kirby, Mas}, whereas the notion of stabilization we arrive at is well suited for studying the modern theory of exotically knotted surfaces.

Our theorem is analogous to C.T.C. Wall's classical result \cite{Wall}, which states that any pair of homeomorphic closed simply-connected oriented $4$-manifolds $X_i$, $i=0,1$, become diffeomorphic after stabilizing by taking connected sums with some number of $S^2 \x S^2$'s. Indeed, this is more than an analogy. When double branched covers along exotically knotted $\Sigma_i$ in $X$ result in an exotic pair of \linebreak $4$-manifolds $\widetilde{X}_i$ (as it is the way to argue that they are smoothly knotted in many examples in the literature, e.g. \cite{Finashin3, FKV}) there is a direct connection. In this case, our stabilization of the embedded surfaces amounts to taking relative connect sum of $(X, \Sigma_i)$ with $(S^4, T^2$), the standard unknotted embedding of $T^2$ in $S^4$, so the double cover along the stabilized surface then gives a connected sum of $\widetilde{X}_i$ with $S^2 \x S^2$, (which is the double branched cover of $S^4$ along the unknotted $T^2$).  Similarly, a twisted stabilization of $\Sigma_i$ yields a stabilization of the double cover $\widetilde{X}_i$ with $S^2 \tilde{\x} S^2$, which is the double branched cover of $S^4$ along the unknotted Klein bottle.

More than $50$ years after Wall, it is still unknown if a single stabilization suffices to get a diffeomorphism between any pair of homeomorphic closed simply-connected oriented 4-manifolds; this is one of the fundamental open problems on $4$-manifolds. Exotic $4$-manifolds of course provide examples where at least one stabilization is necessary, whereas in \cite{BS}, we showed that all construction methods employed up to date to generate infinite families of exotic $4$-manifolds always yield 4-manifolds which become diffeomorphic after a single stabilization.\footnote{Well, almost: we needed an extra blow-up to deal with the spin case, which we expect to be superfluous, and can possibly be avoided by carefully keeping track of involved framings. Another way to avoid the extra blow-up would be using twisted stabilizations with $S^2 \tilde{\x} S^2= \CP \# \CPb$ instead, as in the work of Auckly \cite{Auckly}. } We will examine the analogous question for surface stabilizations: how many are needed to make exotic knottings smoothly isotopic? We analyze in detail various constructions of exotically knotted and exotically embedded surfaces\footnote{Not all exotic embeddings are exotic knottings; one can for instance have ambiently homeomorphic $\Sigma_i$ representing different homology classes \cite{BaykurHayano}. On the other hand, many examples of exotic embeddings in the literature, such as the examples of $\Sigma_i$ in $S^4$ constructed in \cite{Finashin3, FKV}, can be seen to be topologically isotopic \cite{Sunukjian}, and are thus exotically knotted.}, which produced non-orientable surfaces in $S^4$ in the pioneering works of Finashin, Kreck and Viro \cite{FKV} and Finashin \cite{Finashin3}, and orientable surfaces (often topologically isotopic to symplectic/complex curves) in many other $4$-manifolds in the works of Fintushel and Stern \cite{FintushelStern1, FintushelStern2}, Kim and Ruberman \cite{Kim, KimRuberman1, KimRuberman2, KimRuberman3}, Finashin \cite{Finashin1, Finashin2}, Mark \cite{Mark}, Hoffman and Sunukjian \cite{HS}. 

We prove that all these different constructions and their immediate generalizations are subject to the same end result: examples produced using these techniques become smoothly isotopic after a single stabilization. A portion of our detailed analysis can be summarized as follows:

\begin{thm} \label{mainthm2}
Any pair of exotically knotted surfaces $\Sigma_i$, $i=0,1$, in a compact oriented $4$-manifold $X$ obtained through rim surgery \cite{FintushelStern1}, twist rim surgery \cite{Kim, KimRuberman1}, annulus surgery \cite{FKV}, or any other kind of tangle surgery \cite{Finashin2,HS}, become smoothly isotopic after a single standard stabilization. 
\end{thm}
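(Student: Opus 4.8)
The plan is to argue locally and by hand: each of these four operations modifies a given surface $\Sigma_0$ only inside a standard codimension-zero piece $N\subset X$, replacing there a trivial model by a ``knotted'' model built from a knot or tangle in $S^3$, and I will show that a single standard stabilization carried out inside $N$ makes that modification isotopically trivial rel the rest of $X$. So the first step is to put all four constructions on the same footing. For rim surgery and twist rim surgery along a curve $\gamma\subset\Sigma_0$, one has $N\cong S^1\times D^3$ (or its twisted analogue when $\gamma$ is one-sided), inside which $\Sigma_0$ appears as the product annulus $S^1\times a_0$, with $a_0\subset D^3$ a trivial spanning arc, and $\Sigma_1$ as $S^1\times a_K$ for a knotted arc $a_K$ of type $K$ --- carrying in addition an $m$-fold twist along the $S^1$-factor in the twist rim case. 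For annulus surgery and the more general tangle surgeries the same picture holds after replacing the single arc $a_K$ by a knotted tangle in $D^3$. Since a standard stabilization is supported in an arbitrarily small ball, it can be performed inside $N$, so the theorem reduces to the local statement: after one standard stabilization, the knotted model inside $N$ is smoothly isotopic, rel $\partial N$, to the trivial model. Gluing back the part of $X$ on which $\Sigma_0=\Sigma_1$, where nothing moves, then completes the argument.

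For this local statement I would write down the isotopy directly, using the extra topology supplied by the stabilizing handle in essentially the way it enters the proof of Theorem \ref{mainthm1}, but now only once. Being standard, the new handle comes with the dual-sphere room needed to perform Whitney moves in its vicinity; tubing suitable parallel copies of this into the knotted annulus (or tangle) makes available the $\pi_2$ required to run the Whitney moves that undo a chosen sequence of crossing changes converting $a_K$ (resp.\ the knotted tangle) into $a_0$ (resp.\ the trivial tangle), and because all of the knotting sits inside a single standard ball these crossing changes can be handled one at a time, each recycling a fresh parallel copy of the same sphere. A more pictorial way to see the same thing in the rim-surgery model: the handle lets one slide a foot once around the $S^1$-factor while it absorbs the knotting of $a_K$, which is possible because the ``double'' $K\#(-K)$ built from the arc is ribbon, hence slice; the twist present in twist rim surgery is then taken up by the framing of that same handle (or, if one prefers, by a twisted stabilization, as in the spin case of \cite{BS}), and the non-orientable models of \cite{FKV} and the tangle models of \cite{Finashin2, HS} are treated identically, with $K\#(-K)$ replaced by the corresponding doubled tangle, which is again ribbon.

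The main obstacle is exactly the bookkeeping hidden in ``only once'': one must verify that a single dual sphere can be threaded through all of the required Whitney moves without creating new double points, and that the framings and extra twists picked up along the way --- in particular the twist-spin framing in the twist rim case --- are absorbed by that one handle rather than forcing additional stabilizations. This is where the concrete, localized nature of these constructions is essential, in the same spirit in which a single $S^2\times S^2$ sufficed for all known exotic $4$-manifolds in \cite{BS} even though Wall's theorem only guarantees finitely many; no technique beyond those already used for Theorem \ref{mainthm1} should be needed, only care with the moves. Finally, that one stabilization is also necessary here is immediate, since the $\Sigma_i$ are by hypothesis exotically knotted.
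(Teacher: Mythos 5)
There is a genuine gap here, in two places. First, your reduction to a statement that is local rel $\partial N$ is not justified and is, as stated, probably false. The paper's argument does undo the knotting crossing by crossing inside $S^1\times D^3$ (this is the content of Proposition~\ref{p:mainProp}), but the step that lets a \emph{single} handle suffice is that after each crossing change the handle must be shown to still be a \emph{trivial} handle so that it can be repositioned at the next crossing; that triviality is a statement about cords up to homotopy in $X\setminus\Sigma$, via Lemma~\ref{l:mainLemma}, and it depends on global input specific to each construction (rim surgery preserves simple-connectivity of the complement; for twist rim surgery one needs Kim--Ruberman's computation that $\pi_1(S^1\tilde\times C(K))$ maps onto the meridian subgroup; for the FKV/Finashin tangle surgeries one needs the abelianness criterion of \cite{FKV}, and also Lemma~\ref{l:mainLemma2} to ignore framings in the non-orientable case). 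Locally, the repositioned cord is generally a nontrivial element of $\pi_1(D^3\setminus\tau)$, and it only becomes trivial after mapping into $\pi_1(X\setminus\Sigma)$. You flag this recycling issue as ``the main obstacle'' but do not resolve it; resolving it is precisely the substance of the proof, and it cannot be done rel $\partial N$.

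Second, the mechanism you propose --- dual spheres and Whitney moves supplied by the standard handle --- is not available. Adding a trivial tube to $\Sigma$ does not produce a geometrically dual sphere in the complement; you appear to be importing the machinery used for stabilizing the \emph{complement} by $\#\,S^2\times S^2$ (Wall, \cite{AKMR}, \cite{BS}), which the paper explicitly contrasts with surface stabilization. The correct local mechanism is elementary and explicit: the added handle can be dragged once around the $S^1$ factor, which realizes a crossing change of the arc/tangle (Proposition~\ref{p:mainProp}, Figures~\ref{f:theswitch} and \ref{f:crossings2}); no Whitney disks or sliceness of $K\#(-K)$ enter, and the latter would in any case only give a concordance rather than an ambient isotopy. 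Your instinct that the constructions share a common $3$-dimensional-times-$S^1$ local model, and that one handle should be recycled crossing by crossing, matches the paper; what is missing is the actual isotopy (the $S^1$-sweep) and the construction-by-construction fundamental group verifications that legitimize reusing the handle.
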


There is yet another source of exotic knottings of surfaces, coming from dissolving exotic $4$-manifolds. Recall that a simply-connected $X$ is said to be \emph{almost completely decomposable} (ACD), if $X \# \CP$ smoothly dissolves into a connected sum of standard $4$-manifolds $S^4$, $S^2 \x S^2$, $\CP$, and $\CPb$. For any exotic pair of almost completely decomposable $X_i$, inclusions of $\mathbb{CP}^1 \subset \CP$ into the standard manifold $X= X_i \# \CP$ make up a pair of exotically knotted $2$-spheres $\Sigma_i$ in $X$. Recently in \cite{AKMR}, Auckly, Kim, Melvin and Ruberman explored this recipe (also see Akbulut's note \cite{Akbulut}) to produce exotically knotted $2$-spheres $\Sigma_i$ which become smoothly isotopic in the $4$-manifold $X \# S^2 \x S^2= X_i \# \CP \# S^2 \x S^2$ (i.e. after \textit{stabilizing the complements} of $X \setminus \Sigma_i$ by taking connected sum with $S^2 \x S^2$ a l\'{a} Wall). Our analysis extends to their examples: after a single standard handle attachment to $\Sigma_i$ in $X$ as in Theorem~\ref{mainthm1} we again get smoothly isotopic surfaces. This is possible because their examples are very local in nature. However, since in general the ACD property is very much global as opposed to the local nature of all other aforementioned methods of smooth knottings, one might suspect that this framework (or a similar one which can be formulated by dissolving $X_i$ after stabilizing with $S^2 \x S^2$ instead) could potentially produce examples exhibiting more exotic behavior than all the others we have covered in this paper. This further ties the question on the complexity of stable equivalences of exotic $4$-manifolds to that of stable equivalences of exotically knotted surfaces in $4$-manifolds, which in some ways might be more tractable:

\begin{question}
Does every pair of topologically isotopic surfaces $\Sigma_i$, $i=0,1$, in a closed simply-connected oriented $4$-manifold $X$ become smoothly isotopic after a single standard handle attachment? 
\end{question}
%Does every pair of exotically knotted surfaces $\Sigma_i$, $i=0,1$ in a closed simply-connected oriented $4$-manifold $X$ become smoothly isotopic after stabilizing with single standard handle attachment? 

\smallskip
The organization of our paper is as follows: In Section~2, we will build the foundation by describing exactly what we mean when we say that we will stabilize a surface by adding a handle to it in the ambient $4$-manifold, specifically dealing with the differences between orientable and non-orientable surfaces, and discussing issues regarding the framings of these handles. We then prove the results summarized in Theorem~\ref{mainthm1}. In Section~3, we analyze all of the different constructions of exotic knottings of surfaces discussed above, and show how these techniques, and their immediate generalizations, are destined to result in examples of knotted surfaces which become smoothly isotopic after a single standard stabilization.

\vspace{0.3in}
\noindent \textit{Acknowledgements.} We thank Seiichi Kamada for his comments on a draft of this paper. The first author was partially supported by the Simons Foundation Grant $317732$. 

\newpage
% ==========================================================================================================
% ==========================================================================================================

% ==========================================================================================================
% ==========================================================================================================
\section{Stable equivalence for surfaces in $4$-manifolds}
% ========================================================================================

The goal of this section is to develop the right notion of stable equivalence for knotted surfaces in $4$-manifolds, based on a very natural operation: increasing the genus of the surface.  We will first first discuss how this can be done ambiently as a handle attachment, how to control the framings (importantly, the orientability of the surface) and the circumstances when one can perform it in a standard way (i.e. without using knotted handles).  

\vspace{0.1in}
% ========================================================================================
\subsection{Adding handles to an embedded surface} \
% ========================================================================================

When we talk about \textit{adding a handle} to a surface $\Sigma$ in $X$, by a \textit{handle}, we will mean a $3$-dimensional $1$-handle $h$ embedded in the $4$-manifold which intersects the surface only along its attaching region. The new surface $\Sigma'$, the surface \textit{with the added handle}, will be the result of cutting out this intersection, and gluing in the portion of the boundary of $h$ complementary to the attaching region. (As usual, this can be thought of as a cobordism $\Sigma \x [0,1] \cup h$ from $\Sigma$ to $\Sigma'$.) The data necessary to specify how $h$ is attached to $\Sigma$ is spelled out in the lemma below. We will call this procedure \textit{stabilization} of $\Sigma$. 

A handle is called a \textit{trivial handle} if it is isotopic to a handle attached in a small ball neighborhood of a point on $\Sigma$. Up to isotopy there are two such handle attachments; the \textit{untwisted} one that respects the orientation of the surface, and the \textit{twisted} one that doesn't. Note that this depends on the framing of the core arc of $h$. When $\Sigma$ is non-orientable, there are still two ways to attach a handle trivially, which can be distinguished by assigning local orientations. One can think of a trivial handle as specifying the surface $(X, \Sigma')= (X, \Sigma) \# (S^4,T^2)$ in the untwisted case, and $(X, \Sigma')= (X, \Sigma) \# (S^4, \K)$, in the twisted case. Here the $2$-torus $T^2$ and the Klein bottle $\K$ are understood to be embedded in $S^4$ in a trivial way, meaning they bound a solid handlebody in $S^4$. We will refer to attaching a trivial handle as a \textit{standard stabilization} of $\Sigma$. 

\begin{lemma}\label{l:mainLemma}
Let $\Sigma$ be a closed embedded surface in a $4$-manifold $X$, and $\iota:\nu \Sigma \to  X$ denote the inclusion of its tubular neighborhood.  If $\iota_*:\pi_1(\partial (\nu \Sigma))\rightarrow \pi_1(X \setminus \Sigma)$ is surjective, then all handles attached to $\Sigma$ are trivial handles. 
\end{lemma}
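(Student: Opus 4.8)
The plan is to reduce the statement to a fact about arcs in the complement of $\Sigma$. A handle $h$ attached to $\Sigma$ carries a core arc $\gamma$: an arc embedded in $X$ with both endpoints on $\Sigma$ and interior in $X\setminus\Sigma$. Once $\gamma$ has been isotoped into standard position inside a small ball $B$ meeting $\Sigma$ in a single disk, $h$ becomes a handle on a standard arc, hence --- whatever its framing --- one of the two trivial handles recalled above. So it suffices to show: under the surjectivity hypothesis, the core arc of any handle can be isotoped, through arcs with interior off $\Sigma$ and with endpoints free to slide on $\Sigma$, into such a ball $B$. I may assume $\Sigma$ is connected; otherwise one argues componentwise, noting that a handle joining two components of $\Sigma$ can never be trivial, so the hypothesis must be read componentwise in that case.

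First I would pass to the complement. Shrinking $\nu\Sigma$ so that $\gamma$ meets it only in two short collar arcs near its endpoints, set $M:=X\setminus\mathrm{int}(\nu\Sigma)$, a compact $4$-manifold with $\partial M=\partial(\nu\Sigma)$; both $M$ and $\partial M$ are connected. Pushing the endpoints of $\gamma$ onto $\partial M$ turns it into a properly embedded arc $\widehat\gamma$ in $(M,\partial M)$, and a ball $B$ as above corresponds to a ball $B'\subset M$ meeting $\partial M$ in a disk that contains the standard boundary-parallel arc $\widehat\gamma_0$. Since $X\setminus\Sigma$ deformation retracts onto $M$, the homomorphism $\iota_*$ is identified with the one induced by $\partial M\hookrightarrow M$, so $\pi_1(\partial M)\to\pi_1(M)$ is surjective.

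The key point is then that proper arcs $(I,\partial I)\to(M,\partial M)$ form a single homotopy class. After a preliminary homotopy I may assume $\widehat\gamma$ and $\widehat\gamma_0$ have the same endpoints; the classes of arcs joining these two points rel endpoints form a torsor over $\pi_1(M)$, and sliding an endpoint around a loop in $\partial M$ acts on this torsor by translation by an element of the image of $\pi_1(\partial M)$. Since that image is all of $\pi_1(M)$, the action is transitive, so $\widehat\gamma$ is homotopic to $\widehat\gamma_0$ as a map of pairs. To upgrade this to an isotopy I would use general position: for a generic proper homotopy $H:I\times I\to M$ the locus $\{(s,s',t):s\neq s',\ H(s,t)=H(s',t)\}$ has expected dimension $3-4<0$ and is therefore empty (the boundary strata, where endpoints travel in $\partial M$, even more so), so every time-slice $H(\cdot,t)$ is an embedded proper arc and $H$ is an isotopy. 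Extending it to an ambient isotopy of $X$ (preserving $\Sigma$, sliding the feet of $h$ along it) via the isotopy extension theorem carries $\gamma$ into the ball $B$ as the standard arc, so $h$ becomes a trivial handle.

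The main obstacle is the homotopy step --- the only place the hypothesis enters. The naive approach, forming the loop made of $\gamma$ and an arc of $\Sigma$ joining its feet and trying to cap it off in $X\setminus\Sigma$, fails because a null-homotopic loop in a $4$-manifold need not bound an embedded disk; the point is to avoid embedded disks and instead classify core arcs by their homotopy class relative to $\partial(\nu\Sigma)$, which surjectivity of $\iota_*$ collapses to a single class, after which the low dimension does the rest. Secondary care is needed in the framing bookkeeping and in keeping track of the connectivity of $\partial(\nu\Sigma)$, as noted above.
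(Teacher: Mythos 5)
Your proof is correct and follows essentially the same strategy as the paper: classify a handle by the homotopy class of its core arc (cord) modulo endpoint slides in $\partial(\nu\Sigma)$, and note that surjectivity of $\iota_*$ makes the resulting $\pi_1(\partial(\nu\Sigma))$-translation action transitive, so every cord is equivalent to the trivial one and either framing over a trivial cord yields a trivial handle. The one difference is that where the paper cites Boyle and Kamada for the fact that the cord class (plus framing) determines the handle up to isotopy, you supply that ingredient directly via the general-position argument that homotopic proper arcs in a $4$-manifold are isotopic; your dimension count and the framing and connectivity bookkeeping are all correct.
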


This lemma is a straightforward generalization of the work of Boyle in \cite{B}, who classified the handle attachments to oriented surfaces in $S^4$, and its non-orientable counterpart, which was dealt with by Kamada in \cite{K2}. Below, will point out a few of the main features of these papers, which apply to any $\Sigma$ in $X$, and refer to these papers for the details. 

\begin{proof}
Suppose we attach a handle $h$ with attaching sphere at the points $a,b \in \Sigma$, where $\Sigma$ is oriented and $h$ respects the orientation. As shown by Boyle \cite[Section 2]{B}, up to isotopy, the resulting surface only depends on the homotopy class of the core of the handle, i.e. an arc from $a$ to $b$, called a \textit{cord} in this context. The set of cords from $a$ to $b$ can be acted upon in the following obvious ways: (1) we can add a meridian of $\Sigma$ to the homotopy class of the cord, or (2) we can pre-compose (resp. post-compose) with a loop in $\pi_1(\Sigma,a)$ (resp. $\pi_1(\Sigma,b)$) ---or rather, compose with a push-off of such a loop into $X \setminus \Sigma$. (See Figure~\ref{theslide}.) Boyle proves that any two handle additions give smoothly isotopic surfaces in $S^4$ if only if they arise from cords related by a sequence of these two operations \cite{B}, which, mutatis mutandis, works for surfaces in general 4-manifold $X$. Now if $\iota_*: \pi_1(\partial (\nu \Sigma)) \rightarrow \pi_1(X \setminus \Sigma)$ is onto, then any cord can be isotoped to lie in $\partial (\nu \Sigma) \subset \nu \Sigma$. Moreover, all cords represent a trivial handle. This is because $\pi_1(\partial (\nu \Sigma))$ is generated by $\pi_1(\Sigma)$ and the meridian to $\Sigma$ in $X$, and thus any cord can be related to a trivial one by a sequence of the above moves.

\begin{figure}[htbp]
\labellist
\small \hair 0pt
\pinlabel a at 123 23
\pinlabel b at 149 22
\endlabellist
\includegraphics[width=32mm]{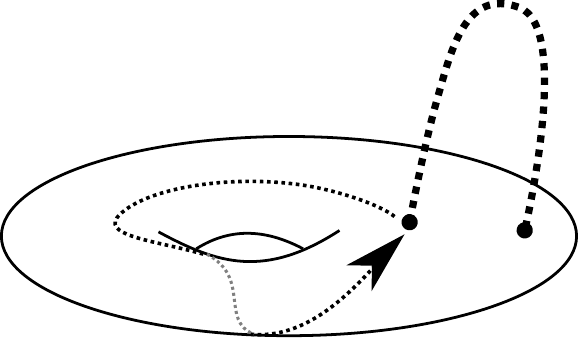}
\caption{The action of $\pi_1(\Sigma,a)$ on handles attached to $\Sigma$ at points $a$ and $b$.}
\label{theslide}
\end{figure}

The cases where either $\Sigma$ is non-orientable or the handle does not respect the orientation are quite similar. In those cases, it is still true that a handle is specified by a cord from $a$ to $b$, but we must also specify the framing along the cord \cite{K2}. Nevertheless, just as in the previous case, the surjectivity of $\pi_1(\partial (\nu \Sigma))\rightarrow \pi_1(X \setminus \Sigma)$ implies that the cord is trivial, and therefore the corresponding handle is trivial (which might or might not respect the local orientations).
\end{proof}

As the following elementary lemma demonstrates, for non-orientable surfaces, the type of the handle attachment (untwisted versus twisted) is often irrelevant.

\begin{lemma}\label{l:mainLemma2}
Let $\Sigma$ be non-orientable and $\pi_1(X  \setminus \Sigma)$ be a cyclic group generated by a meridian to $\Sigma$ in $X$. Then any handle attachment to $\Sigma$ 
is equivalent to a trivial untwisted handle attachment. More generally, if $\Sigma$ contains an orientation-reversing loop $\gamma$ that has a null homotopic push-off into $X \setminus \Sigma$, then any twisted handle attached to $\Sigma$ is equivalent to an untwisted handle with the same core (i.e. the framings along the same cord can be reversed). 
\end{lemma}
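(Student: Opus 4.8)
The plan is to reduce the statement to the calculus of cords and framings from the proof of Lemma~\ref{l:mainLemma}, together with one geometric observation about orientation-reversing loops. Recall from there (following Boyle \cite{B} and Kamada \cite{K2}) that a handle $h$ attached to $\Sigma$ at points $a,b$ is determined up to isotopy of the resulting surface by a cord $c$ from $a$ to $b$ together with, in the non-orientable or orientation-non-respecting case, a framing along $c$; and that two such data yield isotopic surfaces whenever they differ by the moves: move~(1), adding or deleting a meridian of $\Sigma$ to $c$; and move~(2), pre- or post-composing $c$ with the push-off into $X\setminus\Sigma$ of a loop in $\pi_1(\Sigma,a)$ or $\pi_1(\Sigma,b)$. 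The extra input I would record is that when the loop in move~(2) is \emph{orientation-reversing}, the move also reverses the framing along $c$: pushing the relevant foot of $h$ once around such a loop composes the attaching map of the corresponding $2$-disk of $h$ with the monodromy of $T\Sigma$ around the loop, which is orientation-reversing, and this is precisely the passage between an untwisted and a twisted framing on a fixed cord. This is implicit in Kamada's analysis \cite{K2}.

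I would prove the general statement first. Let $h$ be a twisted handle along a cord $c$ from $a$ to $b$, and let $\gamma$ be the orientation-reversing loop in the hypothesis; after conjugating $\gamma$ by an arc in $\Sigma$ from $b$ to its basepoint (which conjugates, hence does not affect the triviality of, its push-off in $X\setminus\Sigma$) we may assume $\gamma\in\pi_1(\Sigma,b)$. Applying move~(2) with $\gamma$ replaces $h$ by a handle along the cord $c\cdot\widetilde{\gamma}$ and reverses its framing; since $\widetilde{\gamma}$ is null-homotopic in $X\setminus\Sigma$, this cord is homotopic to $c$. Hence the twisted handle along $c$ is isotopic to the untwisted handle along $c$, as claimed.

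For the first assertion, since $\pi_1(X\setminus\Sigma)=\langle\mu\rangle$ is cyclic, generated by a meridian $\mu$, the homomorphism $\iota_*$ of Lemma~\ref{l:mainLemma} is surjective, so every handle attached to $\Sigma$ is a trivial handle, hence either untwisted or twisted. To identify these, pick any orientation-reversing loop $\gamma$ in $\Sigma$ (one exists because $\Sigma$ is non-orientable), based at $b$ after the same conjugation; its push-off satisfies $\widetilde{\gamma}=\mu^{k}$ for some $k\in\Z$. By move~(2) the twisted handle along $c$ becomes the untwisted handle along $c\cdot\mu^{k}$, and $|k|$ applications of move~(1) delete these meridians, returning to the cord $c$. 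Thus every handle attachment to $\Sigma$ is equivalent to a trivial untwisted one.

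The step I expect to cost the most care is the claim highlighted above: that sliding a foot of $h$ around an orientation-reversing loop reverses the twist of the handle while changing the cord only by the loop's push-off, and in particular that the framing conventions are normalized so that ``untwisted'' and ``twisted'' are genuinely interchanged rather than left fixed or sent to a third type. This is exactly where non-orientability of $\Sigma$ along $\gamma$ is used: for an orientation-preserving loop the monodromy of $T\Sigma$ is orientation-preserving and move~(2) leaves the twist unchanged. Everything else is formal manipulation with the moves of Lemma~\ref{l:mainLemma}.
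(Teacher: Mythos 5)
Your proof is correct and follows essentially the same route as the paper's: slide one foot of the handle around the orientation-reversing loop $\gamma$, observe that this reverses the framing while changing the cord only by the push-off of $\gamma$, which is either null-homotopic or (in the cyclic case) a power of the meridian removable by move~(1). You are in fact somewhat more careful than the paper, which leaves the meridian-cancellation step in the first assertion implicit.
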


\begin{proof}
Take one of the two ends of the cord, say the point $a$,  and slide it around the loop $\gamma$ in $\Sigma$ back to its original location. By doing this, the framing of the handle is reversed, and the hypotheses guarantee that this does not change the homotopy class of the cord relative to its boundary. One can thus turn an untwisted handle attachment to a twisted one and vice versa.
\end{proof}

%=====================================

%\newpage
\vspace{0.1in}
% ==========================================================================================================
% ==========================================================================================================
\subsection{Stable equivalence} \
% ========================================================================================

Here we will prove several stabilization results collected in Theorem~\ref{mainthm1}. We will handle the orientable and non-orientable surfaces separately.

\begin{theorem}\label{t:stabilizeT2}
Let $\Sigma_i$,  $i=0,1$, be a pair of closed orientable surfaces embedded in the interior of a compact oriented $4$-manifold $X$. If they are in the same homology class, then the $\Sigma_i$ become smoothly isotopic after a sufficient number of untwisted stabilizations. If, moreover, for $i=0,1$ the inclusion induced homomorphism $\pi_1(\partial (\nu \Sigma_i))\rightarrow \pi_1(X \setminus \Sigma_i)$ is surjective, then all stabilizing handles can be taken to be trivial. 
\end{theorem}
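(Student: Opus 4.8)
The plan is to reduce the isotopy problem for surfaces to a relative cobordism problem and then use surgery-theoretic arguments to kill the cobordism. First I would set up the homological input: since $[\Sigma_0] = [\Sigma_1]$ in $H_2(X;\Z)$, the surfaces cobound a (possibly non-smoothly-embedded, but homologically trivial) $3$-chain, and by general position in $X \times [0,1]$ I can find an embedded oriented $3$-manifold $W$ with $\partial W = \Sigma_0 \sqcup \Sigma_1$ inside $X \times [0,1]$, meeting $X \times \{0,1\}$ in the two surfaces. (Here one uses that $X\times[0,1]$ is $5$-dimensional, so a generic representative of the relative homology class is embedded.) The genus of the slices of $W$ need not be monotone, but the key classical fact — drawing on the Hosokawa–Kawauchi argument \cite{HK} and the cobordism techniques of \cite{GL, K, Mas} — is that any such cobordism can be arranged, after handle trading, to be built from the bottom up using only $1$-handles, at the cost of first increasing the genus of both ends. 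Concretely: a generic height function on $W$ has critical points of index $0,1,2,3$; the index-$0$ and index-$3$ critical points (births and deaths of circles) can be cancelled or pushed off since $W$ is connected and has no closed components once we restrict attention to the connected piece joining the $\Sigma_i$; and the index-$2$ critical points of $W$, read from the top down, are index-$1$ critical points, i.e. stabilizations of $\Sigma_1$. So after stabilizing $\Sigma_0$ and $\Sigma_1$ each a finite number of times, they are related by a cobordism with only index-$1$ critical points — an \emph{ascending} sequence of stabilizations — meaning a single surface $\Sigma$ is obtained from each $\Sigma_i$ by adding handles. Since adding handles to a surface is symmetric (one can also view $\Sigma_i$ as $\Sigma$ with handles removed, but more usefully: stabilize the larger one further until both reach a common surface), one concludes $\Sigma_0$ and $\Sigma_1$ become isotopic after enough stabilizations.

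The orientation bookkeeping for the ``untwisted'' claim requires care: I need the handles that appear to respect the orientation of the surface. This is where orientability of $W$ enters. Because $W \subset X \times [0,1]$ is an oriented $3$-manifold with oriented boundary, the framings of the $1$-handles in any handle decomposition of $W$ are constrained — a $1$-handle attached to an oriented surface inside an oriented $3$-manifold, with the induced orientations matching on the boundary, is automatically an \emph{untwisted} handle (the core arc inherits a framing from the normal bundle of $W$ in $X\times[0,1]$, which is orientable since both $W$ and $X\times[0,1]$ are). So the orientability hypothesis feeds directly into making every stabilization untwisted. I would spell this out as the one genuinely non-formal step.

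For the last sentence, assume in addition that $\iota_*:\pi_1(\partial(\nu\Sigma_i))\to\pi_1(X\setminus\Sigma_i)$ is surjective for $i=0,1$. By Lemma~\ref{l:mainLemma}, \emph{any} handle attached to $\Sigma_i$ is a trivial handle; and by the orientation discussion above combined with Lemma~\ref{l:mainLemma2} (or directly, since the surfaces here are orientable, the untwisted/twisted distinction is already pinned down by the framing coming from $W$), the handles produced by the cobordism argument are trivial untwisted handles. One subtlety: Lemma~\ref{l:mainLemma} is a statement about handles attached to a \emph{fixed} surface, whereas in the cobordism we attach a handle, then attach the next handle to the already-stabilized surface. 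So I would check that the surjectivity hypothesis persists: attaching a trivial handle to $\Sigma_i$ replaces it by $(X,\Sigma_i)\#(S^4,T^2)$, and since the standard $T^2\subset S^4$ has $\pi_1$-surjective boundary onto its (solid-handlebody) complement, a van Kampen argument shows $\pi_1(\partial(\nu\Sigma_i'))\to\pi_1(X\setminus\Sigma_i')$ remains surjective. Hence the hypothesis is preserved under standard stabilization and the conclusion propagates through the whole sequence.

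**Main obstacle.** The hard part is the handle-trading step: showing that the embedded cobordism $W$ between homologous surfaces can be reorganized (after stabilizing the ends) into one with no index-$0$ or index-$3$ critical points, so that it becomes a pure sequence of stabilizations. This is where the connectivity of $W$, the cancellation of $0$-/$3$-handles against $1$-/$2$-handles, and the care to keep everything embedded in the $5$-manifold $X\times[0,1]$ all have to be managed simultaneously — essentially the content borrowed from \cite{HK} and \cite{GL, K, Mas}, here carried out in an arbitrary ambient $X$ rather than $S^4$.
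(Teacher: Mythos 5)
Your route is genuinely different from the paper's. The paper never leaves $X$: it isotopes the $\Sigma_i$ to intersect transversely, excises balls around the intersection points, and constructs a \emph{relative Seifert manifold with corners} $Y\subset X$ bounded by the punctured surfaces together with the Hopf-link annuli in the excised spheres; the technical bulk of the proof is devoted to building $Y$ by representing a class in $H^1$ of the complement as a map to $S^1$ and homotoping it to be standard on each boundary piece (using that the Hopf link is fibered). A relative handle decomposition of $Y$ then exhibits the common stabilization, and orientability of $Y$ gives untwistedness. Your passage to $X\times[0,1]$ sidesteps the entire intersection-point difficulty, since $\Sigma_0\times\{0\}$ and $\Sigma_1\times\{1\}$ are automatically disjoint; the Morse-theoretic reorganization (cancel index $0$ and $3$, push index $1$ below index $2$, read the top half upside down) and the orientation argument for untwisted framings then match the paper's in spirit. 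What your version additionally owes the reader is a compression step: a $1$-handle of the cobordism $W$ lives in $X\times[t-\epsilon,t+\epsilon]$, and one must push its core arc into a single slice $X\times\{t\}$ (which is fine, since an arc and a surface are generically disjoint in a $4$-manifold) in order to realize the passage of a critical point as an ambient stabilization in $X$. Your observation that $\pi_1$-surjectivity persists under standard stabilization, so that Lemma~\ref{l:mainLemma} can be applied handle by handle, is a point the paper leaves implicit and is worth keeping.

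The one step that is wrong as stated is the existence of the embedded $W$: general position does \emph{not} make a generic $3$-manifold in a $5$-manifold embedded (since $3+3>5$, a generic map of a $3$-manifold into a $5$-manifold has double curves), and in any case relative homology classes are not automatically represented by maps of manifolds. The correct argument is precisely the Seifert-manifold construction that occupies most of the paper's proof: since $\Sigma_0\times\{0\}\sqcup-\Sigma_1\times\{1\}$ is an oriented null-homologous surface in the oriented manifold $X\times[0,1]$, there is a class in $H^1$ of its complement evaluating to $1$ on every meridian; represent it by a map to $S^1$ that is standard near the boundary of a tubular neighborhood, and take the preimage of a regular value (cf. \cite{Kirby}). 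In your disjoint setting this is routine and noticeably easier than the paper's relative version with corners, but it is not ``general position,'' and without it your $W$ has not been produced.
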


%E.g. $(X,\Sigma_0) \# n(S^4,T^2)$ is smoothly isotopic to $(X,\Sigma_1) \# m(S^4,T^2)$ for some $n$ and $m$.

%Note, if $\Sigma_0$ and $\Sigma_1$ have the same genus then $n=m$. The hypotheses of the theorem are satisfied if the complements of the $\Sigma_i$ have cyclic fundamental group (generated by a meridian to the surfaces). Moreover, this implies that exotically embedded surfaces become isotopic after a sufficient number of stabilizations. In the next section we describe when a single stabilization is sufficient.

\begin{proof}
Let us first consider the following simplified scenario: $\Sigma_0$ and $\Sigma_1$ are disjoint, and the oriented link $\Sigma_0 \sqcup  - \Sigma_1$ bounds a \textit{Seifert 3-manifold}\footnote{Typically a Seifert manifold is defined to be orientable, but in this paper we will also allow allow non-orientable Seifert manifolds bounded by non-orientable surfaces.} $Y$ in $X$, that is, in this case a compact oriented $3$-manifold embedded into $X$ whose oriented boundary is $\Sigma_0 \sqcup  - \Sigma_1$. A relative Morse function on $(Y,\Sigma_0)$ induces a handle decomposition on $Y$. By standard arguments, we can assume that this is a self-indexing Morse function with only index $1$- and $2$- critical points. The induced handle decomposition implies that  $\Sigma_0$ plus the $1$-handles of $Y$ is isotopic to $\Sigma_1$ plus the $1$-handles of the dual handle decomposition of $Y$ (i.e. the original $2$-handles of $Y$, turned upside down).\footnote{This is the essence of the proof given in \cite{HK} for surfaces in $S^4$.} Because $Y$ is oriented, these handles are all untwisted handles.

This proof runs into a problem when $\Sigma_0$ and $\Sigma_1$ intersect, which is unavoidable when they represent a homology class with non-zero self-intersection. We will handle this more general situation in a way that will also cover the special case above. 

Begin by isotoping $\Sigma_0$ and $\Sigma_1$ such that they intersect transversely and non-trivially if they do not already intersect. The arguments to follow will essentially show that these surfaces are stably isotopic away from their intersection points, because they bound a \textit{relative Seifert manifold}, a certain $3$-manifold with corners that we will discuss shortly. Let $\Sigma_0 \cap \Sigma_1 = \{p_1,\ldots p_n\}$. Orient $\Sigma_0$ and $\Sigma_1$ such that their union $\Sigma_0 \cup -\Sigma_1$ is null-homologous in $X$. For each $j=1, \ldots, n$, let $D_j$ be a $4$-ball centered at $p_j$, disjoint from each other, and small enough that $\partial D_j \cap (\Sigma_0 \cup \Sigma_1)$ is a Hopf link. Denote the annular Seifert surface spanning these Hopf links $A_j$. Let $\hat{X}$ be the complement of the interiors of all $D_j$, $j=1, \ldots, n$, in $X$ and set $\hat{\Sigma}_i = \hat{X}\cap \Sigma_i$, for each $i=0,1$.

\vspace{0.2cm} 
\noindent \textit{Claim: The union of $\hat{\Sigma}_0, -\hat{\Sigma}_1$, and the $A_i$ bounds an oriented $3$-manifold $Y$ with corners with the following properties: $Y$ is a cobordism between manifolds with boundary, a cobordism from $(\hat{\Sigma}_0,\partial \hat{\Sigma}_0)$ to $(\hat{\Sigma}_1,\partial \hat{\Sigma}_1)$ which is a product cobordism on the boundary (i.e. $A_j$ on the boundary).} \\

Assuming this fact for the moment, we can induce a relative handle structure on the manifold with corners $Y$, which, just as in the simpler case discussed in the beginning, shows that $\hat{\Sigma}_0$ and $\hat{\Sigma}_1$ become isotopic after adding to each the untwisted $1$- and $2$-handles of $Y$, respectively. Evidently, this isotopy follows the path of the annulus in $\partial \hat{X}$. Such an isotopy clearly extends to the $D_j$, i.e. the isotopy of the stabilized $\hat{\Sigma}_0$ to the stabilized $\hat{\Sigma}_1$ extends to an isotopy of the stabilized $\Sigma_0$ to the stabilized $\Sigma_1$. Lastly, if $\pi_1(\partial \nu \Sigma_i)\rightarrow \pi_1(X \setminus \Sigma_i)$ is surjective, then all of the handles are trivial by Lemma \ref{l:mainLemma}.

It remains to prove the above claim, which will complete the proof of our theorem. Our proof of this claim will follow the familiar scheme to produce Seifert surfaces: Letting $M = \hat{X} \setminus \nu(\hat{\Sigma}_0 \cup -\hat{\Sigma}_1)$, we will construct a map $M \to S^1$ with $Y$ as the preimage of a point. Since such a proof seems to be absent from the literature in the case of surfaces with boundary, we will be explicit about a few of the key details.

First note that there is a cohomology class $\alpha \in H^1(M)$ such that $\alpha[x] = 1$, where $x$ is any oriented meridian of either $\hat{\Sigma}_0$ or $-\hat{\Sigma}_1$. To show this, it is sufficient to find a properly embedded $3$-manifold (with boundary) in $M$ that intersects any meridian once; $\alpha$ will be its Poincar\'{e} dual. Such a $3$-manifold exists for the following reason: In $X$, smooth $\Sigma_0 \cup -\Sigma_1$ at its double points. This will be a connected null-homologous surface, which we denote by $\Sigma$. So $\Sigma$ bounds an oriented $3$-manifold in $X$ (see e.g. \cite{Kirby}). But the intersection of this $3$-manifold with $M$ is exactly the $3$-manifold with boundary (and corners) that we desire.

Now, represent $\alpha$ as a map $f:M \rightarrow S^1$, which we will homotope so that $f^{-1}(pt)$ is the desired (relative) Seifert $3$-manifold. We will first homotope $f$ along the boundary of $M$, and then extend that homotopy to be smooth in the interior as well. To simplify our exposition, assume that $\Sigma_0$ and $\Sigma_1$ intersect at one point $p_1$. The boundary of $M$ is $\hat{\Sigma}_0 \times S^1 \cup \hat{\Sigma}_1 \times S^1 \cup H$, where $H$ is the complement of the Hopf-link in $S^3$, the boundary of the $4$-ball $D_1$ we took around $p_1$. In the $\hat{\Sigma}_0 \times S^1$ piece, $f$ is dual to the meridian $pt \times S^1$, so it is homotopic to the projection onto the second factor, $\hat{\Sigma}_0 \times S^1 \rightarrow S^1$, perhaps after first composing with a self-homeomorphism of $\hat{\Sigma}_0\times S^1$. Similarly, $f$ is dual to the meridian of the $\hat{\Sigma}_1 \times S^1$ piece, so it is homotopic to a projection there as well. 

Since Hopf link is a fibered link with annuli fibers, there exists a map $g: H \rightarrow S^1$ such that $g^{-1}(pt)$ is the annular Seifert surface of the Hopf link. Next, we would like show that $f$ and $g$ are homotopic on $H$. Let $\beta$ be the class in $H^1(H)$ corresponding to $g$. It suffices to show that $\alpha|_H = \beta$ in $H^1(H)$. Here $H_1(H)$ is generated by the meridians $\mu, \mu'$ to each component of the Hopf link.  We then have 
\[ \langle \alpha|_H,  [\mu] \rangle = \langle\alpha|_H, [\mu']\rangle = 1 = \langle \beta, [\mu] \rangle = \langle \beta, [\mu']\rangle \, , \] 
because $\alpha$ is dual to $[\mu]=[\mu']$ in $H^1(M)$, and because $\langle \beta, [\mu] \rangle = g^{-1}(pt) \cdot \mu$, where $\mu$ intersects the Seifert surface $g^{-1}(pt)$ once (and the same goes for $\mu'$).  So $\alpha|_H = \beta$, implying that $f$ is homotopic to $g$ on $H$. We can of course homotope $f$ in the same manner around any number of points in $\Sigma_0 \cap \Sigma_1$, thus the argument extends to any number of intersection points $\{p_0, \ldots, p_n\}$.

Finally, with this choice of map $f$ after a homotopy, we have that $Y=f^{-1}(pt)$ is the oriented Seifert $3$-manifold with corners that we sought (or rather, it extends from $M$ to a Seifert manifold in $\hat{X}$).
\end{proof}

\smallskip
Next, we will prove a version of the above theorem for non-orientable surfaces. The crucial difference in this case is that the normal bundle of such a surface is no longer determined by its homology class (which is trivial as an integral class) but rather by its \textit{normal Euler number}. (See e.g. \cite{Mas} for the normal bundles of non-orientable surfaces in $S^4$.)

\begin{theorem}\label{ThmStabNonorient}
Let $\Sigma_i$,  $i=0,1$, be a pair of closed non-orientable surfaces embedded in the interior of a compact oriented $4$-manifold $X$. If they are in the same homology class, and have the same normal Euler number, then the $\Sigma_i$ become smoothly isotopic after a sufficient number of stabilizations by handle attachments. If, moreover, the inclusion induced homomorphism $\pi_1(\partial (\nu \Sigma_i))\rightarrow \pi_1(X \setminus \Sigma_i)$ is surjective for $i=0,1$, then all handles can be taken to be standard. If in addition $\Sigma_i$ contains an orientation-reversing loop $\gamma$ that has a null homotopic push-off into $X \setminus \Sigma_i$, then these trivial handles can be assumed to be untwisted.
\end{theorem}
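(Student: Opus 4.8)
The plan is to run the proof of Theorem~\ref{t:stabilizeT2} with $\mathbb{Z}/2$ coefficients in place of $\mathbb{Z}$, keeping careful track of framings throughout. As in that proof, I would first isotope $\Sigma_0$ and $\Sigma_1$ to meet transversely (and non-trivially, if they happen to be disjoint), take a small $4$-ball $D_j$ around each double point $p_j$ so that $\partial D_j$ meets $\Sigma_0\cup\Sigma_1$ in a Hopf link spanned by an annulus $A_j$, and set $\hat X=X\setminus\bigsqcup_j\mathrm{int}\,D_j$, $\hat\Sigma_i=\hat X\cap\Sigma_i$. The heart of the matter is again a \emph{relative Seifert manifold}: a $3$-manifold with corners $Y\subset\hat X$ --- now allowed to be \emph{non-orientable} --- whose boundary is $\hat\Sigma_0\cup\hat\Sigma_1\cup\bigcup_j A_j$ and which is a cobordism from $(\hat\Sigma_0,\partial\hat\Sigma_0)$ to $(\hat\Sigma_1,\partial\hat\Sigma_1)$ that is the product $A_j$ on the boundary.

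To construct $Y$ I would reproduce the scheme from the proof of Theorem~\ref{t:stabilizeT2}, but produce a class $\alpha\in H^1(M;\mathbb{Z}/2)$, where $M=\hat X\setminus\nu(\hat\Sigma_0\cup\hat\Sigma_1)$, which is dual to every meridian; passing to $\mathbb{Z}/2$ coefficients is forced here because a meridian of a non-orientable surface is only well-defined up to sign. Such an $\alpha$ exists precisely when the closed surface $\Sigma$ obtained by smoothing the double points of $\Sigma_0\cup\Sigma_1$ bounds a $3$-submanifold of $X$, and this is exactly what the hypotheses buy us: $\Sigma_0$ and $\Sigma_1$ being homologous over $\mathbb{Z}/2$ kills the class of $\Sigma$ in $H_2(X;\mathbb{Z}/2)$, while their having equal normal Euler numbers forces $\chi(\Sigma_0)\equiv\chi(\Sigma_1)\pmod 2$ and hence kills the remaining Wu-type obstruction $w_1(T\Sigma)^2[\Sigma]=\chi(\Sigma)\bmod 2$ in $\Omega_2^O(\mathrm{pt})$ (it also makes $\nu\Sigma_0\cong\nu\Sigma_1$). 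Then I would realize $\alpha$ as the zero set of a generic section of the real line bundle $L\to M$ with $w_1(L)=\alpha$ --- the substitute for the fiber of a map $M\to S^1$ --- homotope this section over $\partial M$ so that it restricts to a section of the normal circle bundle $S(\nu\hat\Sigma_i)\to\hat\Sigma_i$ for each $i$ (such a section exists because $H^2(\hat\Sigma_i;-)=0$ for a surface with boundary) and to the fibered annulus over each Hopf-link complement (the identification $\alpha|_H=\beta$ there carries over verbatim with $\mathbb{Z}/2$ coefficients), and then extend the homotopy into the interior; the zero set $Y$ of the resulting section is the desired relative Seifert manifold. I expect making this boundary normalization precise, and in particular tracking exactly how the normal Euler number enters the bordism bookkeeping, to be the main technical obstacle.

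Granting $Y$, the rest proceeds as in the orientable case. A self-indexing relative Morse function on $(Y,\hat\Sigma_0)$ with only index-$1$ and index-$2$ critical points shows that $\hat\Sigma_0$ with the $1$-handles of $Y$ attached is isotopic to $\hat\Sigma_1$ with the (dual) $1$-handles attached; the isotopy runs along the annuli $A_j$ in $\partial\hat X$ and so extends across the $D_j$ to an isotopy between the stabilized $\Sigma_0$ and the stabilized $\Sigma_1$. Since $Y$ may be non-orientable, the framings along some of these handles may be twisted, which gives the first assertion. If $\pi_1(\partial(\nu\Sigma_i))\to\pi_1(X\setminus\Sigma_i)$ is surjective, then Lemma~\ref{l:mainLemma} upgrades every one of these handles to a trivial (standard) one, twisted or untwisted, giving the second assertion. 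Finally, if each $\Sigma_i$ carries an orientation-reversing loop with a null-homotopic push-off into $X\setminus\Sigma_i$, then Lemma~\ref{l:mainLemma2} lets us reverse the framing along the core of each twisted trivial handle, so that all the standard handles may be taken untwisted; one applies the lemma handle-by-handle, using that a standard stabilization leaves such an orientation-reversing loop in place.
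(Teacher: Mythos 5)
Your overall architecture matches the paper's: reduce to a (possibly non-orientable) relative Seifert manifold $Y$ with corners, run the relative Morse theory to trade the handles of $Y$ for stabilizing handles on the surfaces, and invoke Lemmas~\ref{l:mainLemma} and \ref{l:mainLemma2} for the ``standard'' and ``untwisted'' refinements; your reformulation of the paper's maps to $RP^{\infty}$ as zero sets of generic sections of the real line bundle with $w_1=\alpha$ is equivalent to what the paper does. The gap is in the one step that is genuinely new in the non-orientable case, namely the construction of $Y$, and more precisely in where the equal-normal-Euler-number hypothesis gets used. You place it in an abstract bordism computation ($w_1(T\Sigma)^2[\Sigma]=\chi(\Sigma)\bmod 2$ in $\Omega_2^O$) and in the parenthetical $\nu\Sigma_0\cong\nu\Sigma_1$; but whether the smoothed closed surface $\Sigma$ bounds an \emph{embedded} $3$-manifold in $X$ is not governed by its abstract bordism class. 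Beyond $[\Sigma]=0$ in $H_2(X;\mathbb{Z}_2)$ (which is what produces $\alpha$), the genuine obstruction is that the normal circle bundle $N=\partial(\nu\Sigma)$ of the closed smoothed surface must admit a section, i.e.\ that $e(\nu\Sigma)=0$; and this is exactly what $e(\nu\Sigma_0)=e(\nu\Sigma_1)$ buys once the contributions of the smoothed intersection points are accounted for. This is the content of the paper's Claim in this proof and of the Remark following the theorem (a null-homologous non-orientable surface with nonzero normal Euler number has no Seifert manifold at all).

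Your piecewise version of the boundary normalization hides precisely this point: you assert that a section of $S(\nu\hat\Sigma_i)\to\hat\Sigma_i$ exists ``because $H^2(\hat\Sigma_i;-)=0$,'' but the section is not free --- along $\partial\hat\Sigma_i$ it must agree with the boundary of the fibered annulus in each Hopf-link complement, and the total obstruction to this \emph{relative} extension problem over $\hat\Sigma_0\sqcup\hat\Sigma_1$ is a relative Euler number equal to the normal Euler number of the closed-up $\Sigma$. So the hypothesis you never actually deploy is needed exactly at the step you flag as ``the main technical obstacle.'' The paper sidesteps the gluing issue by constructing the Seifert manifold for the closed surface $\Sigma$ first --- homotoping $f:X\setminus\nu\Sigma\to RP^{\infty}$ so that on $N$ the preimage of a codimension-one $RP^{\infty}$ is a section, via the Thom space of the tautological bundle, which is possible because $e(N)=0$ --- and only then intersecting with $M$ to obtain the relative $Y$. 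Everything after the construction of $Y$ in your write-up is correct and agrees with the paper.
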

%Moreover, if there exists a loop in $\partial (\nu \Sigma_i)$ that is both trivial in $X \setminus \Sigma_i$, and projects via  $\partial \nu \Sigma_i \rightarrow \Sigma_i$ to a loop with a non-orientable neighborhood in $\Sigma_i$, then the trivial handles can be taken to be of either orientation. E.g. $(X,\Sigma_0) \# n(S^4,T)$ is isotopic to $(X,\Sigma_1) \# m(S^4,T)$ for some $n$ and $m$, where $T$ can be taken to be either the torus or the Klein bottle.

\begin{proof}

The proof is similar to the orientable case. That is, we'll first isotope the surfaces so that they intersect non-trivially, and then --using the notation of the previous theorem-- find a Seifert manifold with corners $Y$ in $X$ to construct the isotopy. The only extra care we need to take is in finding the Seifert manifold.

To do this, we will first show that there is a cohomology class in $H^1(M, \mathbb{Z}_2)$ characterized by evaluating to 1 on any meridian of $\hat{\Sigma}_0 \cup \hat{\Sigma}_1$. As before, this is equivalent to finding a Seifert manifold for the connected surface $\Sigma$ in $X$. Since such a Seifert manifold intersects any meridian once, the  Poincare dual of such a Seifert manifold pulled back to $M$ will be the $\alpha$ we desire. But why does $\Sigma$ in $X$ have such a Seifert manifold? 

First of all, a meridian to $\Sigma$ must be homologically essential in $H_1(X\setminus \nu\Sigma, \mathbb{Z}_2)$, since otherwise there would be a surface intersecting $\Sigma$ once, which is impossible given $\Sigma$ is homologically trivial. The dual  $\alpha \in H^1(X\setminus \nu\Sigma, \mathbb{Z}_2)$ to this meridian can be represented by a map $f:X\setminus\nu\Sigma \rightarrow RP^{\infty}$. 

\vspace{0.2cm} 
\noindent \textit{Claim: We can homotope $f$ so that on the boundary $\partial\nu\Sigma$, which is an $S^1$ bundle over $\Sigma$, the inverse image of a codimension-$1$ copy of $RP^{\infty}$ is a section of the $S^1$ bundle.} \\

Assuming the claim, we can extend this homotopy to all of $X\setminus\nu\Sigma$, smooth it in the interior, and then the preimage of $RP^{\infty}$ in $X\setminus \nu\Sigma$ is a Seifert manifold to $\Sigma$. Since $\alpha$ pulled back to $M$ evaluates to $1$ on every meridian of either $\hat{\Sigma}_0$ or $\hat{\Sigma}_1$, we can similarly homotope the corresponding map $f:M \rightarrow RP^{\infty}$ such that the preimage of a codimension-1 copy of $RP^{\infty}$ is a section along $\partial \nu (\hat{\Sigma}_0 \cup \hat{\Sigma}_1)$ and the Hopf link on $H$. Using this method we have found a Seifert manifold $Y$ with corners as before, and the rest of the argument follows exactly as in the orientable case. Note however, that in this case our Seifert manifold won't be orientable, so the handles that we add are not automatically untwisted. On the other hand, when $\pi_1(\partial (\nu \Sigma_i))\rightarrow \pi_1(X \setminus \Sigma_i)$ is surjective, and $\Sigma_i$ contains an orientation reversing loop that pushes off to a null-homotopic loop in $X\setminus \Sigma_i$, then all of the handles are equivalent to untwisted handles by Lemma \ref{l:mainLemma2}.

It only remains to prove the claim.

Let $N = \partial (\nu\Sigma)$ and recall that this is an $S^1$ bundle over $\Sigma$ with trivial Euler number. By definition, $\alpha$ restricted to $N$ is dual to any $S^1$ fiber, a meridian of $\Sigma$. If we can find  find a map $f':N \rightarrow RP^{\infty}$ such that the preimage of a codimension-1 copy of $RP^{\infty}$ is a section of the $S^1$ bundle, then it must be homotopic to $f$, because it represents a class in $H_2(N,\mathbb{Z}_2)$ that is similarly dual. To construct such an $f'$, let for an interval $I$, $I \tilde{\times} \Sigma$ denote a neighborhood of a section of the $S^1$-bundle. This $I$-bundle is classified by a map $0\times \Sigma \rightarrow RP^2$, which extends to a bundle map from  $I \tilde{\times} \Sigma$ to the tautological bundle over $RP^2$, and by collapsing we get a map from all of N to the Thom space of the tautological bundle (i.e. $RP^3$), such that the inverse image of $RP^2$ is $0\times \Sigma$ (i.e. a section of the $S^1$ bundle $N$). This is easily turned into a statement about $RP^n$ for arbitrarily large $n$.
\end{proof}

\begin{remark}
The extra requirement about the Euler class is necessary, because a non-orientable surface can be null-homologous, but if its normal bundle does not have a non-vanishing section, it will fail to have a Seifert manifold. Seifert manifolds of non-orientable surfaces in $S^4$ are investigated in \cite[Theorem 3.8]{K} and \cite[Section 6]{GL}.
\end{remark}

\smallskip

With the above theorems in hand, we can now derive the following corollary for exotically knotted surfaces:

\begin{cor}\label{CorStabExotic}
Let $\Sigma_i$, $i=0,1$, be a pair of exotically knotted surfaces in $X$, where $\pi_1(X\setminus \Sigma_i)$ is a cyclic group generated by a meridian of $\Sigma_i$ in $X$. Then they become smoothly isotopic after a same number of standard (untwisted) stabilizations. 
\end{cor}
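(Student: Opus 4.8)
The plan is to derive Corollary~\ref{CorStabExotic} as an essentially immediate consequence of Theorems~\ref{t:stabilizeT2} and~\ref{ThmStabNonorient}, after checking that the hypotheses of whichever theorem applies are met. First I would note that since $\Sigma_0$ and $\Sigma_1$ are topologically isotopic, they have the same topological type: either both are orientable or both are non-orientable, they have equal genus, and they represent the same homology class in $H_2(X)$. Moreover a topological isotopy carries the topological normal bundle of $\Sigma_0$ to that of $\Sigma_1$, so in the non-orientable case the normal Euler numbers agree as well (the normal Euler number being a homeomorphism invariant of the pair $(X,\Sigma)$). This verifies the numerical hypotheses of Theorem~\ref{t:stabilizeT2} in the orientable case and of Theorem~\ref{ThmStabNonorient} in the non-orientable case.

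Next I would check the fundamental-group condition. By hypothesis $\pi_1(X\setminus\Sigma_i)$ is cyclic and generated by a meridian $\mu_i$ of $\Sigma_i$. Since $\partial(\nu\Sigma_i)$ is an $S^1$-bundle over $\Sigma_i$ and the fiber class maps to the meridian $\mu_i$, the image of the inclusion-induced homomorphism $\iota_*\colon \pi_1(\partial(\nu\Sigma_i))\to\pi_1(X\setminus\Sigma_i)$ contains $\mu_i$, which already generates the whole (cyclic) group; hence $\iota_*$ is surjective. Therefore the "moreover" clauses of both theorems apply, and the stabilizing handles may all be taken to be trivial (standard). In the orientable case Theorem~\ref{t:stabilizeT2} directly gives that these trivial handles are untwisted, so we are done. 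In the non-orientable case I would invoke the last clause of Theorem~\ref{ThmStabNonorient}: a non-orientable $\Sigma_i$ contains an orientation-reversing loop $\gamma$, and since $\pi_1(X\setminus\Sigma_i)$ is generated by a meridian, the push-off of $\gamma$ into $X\setminus\Sigma_i$ lies in the image of $\pi_1(\Sigma_i)$ composed with meridians; but more to the point, Lemma~\ref{l:mainLemma2} already records that when $\pi_1(X\setminus\Sigma_i)$ is cyclic generated by a meridian, every handle attachment to a non-orientable $\Sigma_i$ is equivalent to a trivial untwisted one. So in either case the surfaces become smoothly isotopic after attaching some number of standard untwisted handles.

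Finally I would address the one point of bookkeeping that keeps this from being totally formal: the two theorems a priori produce, for a given pair, some number $N_0$ of handles added to $\Sigma_0$ and some number $N_1$ added to $\Sigma_1$, whereas the corollary asserts they become isotopic after the \emph{same} number of standard stabilizations on each side. This is harmless: since a standard untwisted stabilization is a connected sum with $(S^4,T^2)$ (resp.\ with $(S^4,\K)$, but here untwisted) supported in a small ball, performing extra standard stabilizations on the surface with fewer handles and mirroring them on the other via the isotopy shows that if $\Sigma_0$ with $N_0$ standard handles is isotopic to $\Sigma_1$ with $N_1$ standard handles, then $\Sigma_0$ with $\max(N_0,N_1)$ standard handles is isotopic to $\Sigma_1$ with $\max(N_0,N_1)$ standard handles. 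I expect this last balancing step, and the verification that the normal Euler number is a topological-isotopy invariant, to be the only places requiring any argument at all; the heart of the matter is entirely contained in the two preceding theorems, and the role of the cyclic-$\pi_1$ hypothesis is simply to force $\iota_*$ surjective and, in the non-orientable case, to kill the twisting.
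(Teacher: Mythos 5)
Your proposal is correct and follows essentially the same route as the paper: both deduce the corollary from Theorems~\ref{t:stabilizeT2} and~\ref{ThmStabNonorient} by observing that topological isotopy gives matching homology class (and normal Euler number in the non-orientable case), and that a cyclic meridian-generated $\pi_1(X\setminus\Sigma_i)$ forces $\iota_*$ surjective, with Lemma~\ref{l:mainLemma2} killing the twisting when $\Sigma_i$ is non-orientable. The only cosmetic difference is your balancing step for the ``same number'' claim, which the paper gets for free since the stabilized surfaces, being isotopic, have equal genus while the original surfaces are already homeomorphic.
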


\begin{proof}
An exotic pair of $\Sigma_0$ and $\Sigma_1$ are topologically isotopic, so they are homeomorphic and are in the same homology class. Furthermore, if they are non-orientable, then both will have the same normal Euler number. So by the above theorems, they become smoothly isotopic after adding enough number of handles, necessarily the same number to each. On the other hand, the assumption on $\pi_1(X \setminus \Sigma_i)$ implies both that the inclusion induced homomorphism $\pi_1(\partial (\nu \Sigma_i))\rightarrow \pi_1(X \setminus \Sigma_i)$ is surjective for $i=0,1$, so the handles can be taken to be standard, moreover can be taken to be untwisted.
\end{proof}

%\newpage
\vspace{0.2in}

% =============================================================================
\section{How many stabilizations are needed?}
% =============================================================================

In this section we will develop the tools necessary to explicitly stabilize surfaces in concrete situations. Insofar as many exotic surfaces are constructed using techniques that can be viewed as $3$-dimensional modifications crossed with $S^1$, we will begin with some notation and a few basic facts for handles in that context. After that we will move on to analyzing all of the different techniques currently known for constructing exotic knottings of surfaces, and show for each and every one that the resulting knotted surfaces become smoothly isotopic after a single untwisted standard stabilization in the sense of previous section.

\begin{figure}[htbp]
\labellist\hair 0pt
\vspace{0.1in}
\pinlabel (a1) at 38 -9
\pinlabel (a2) at 122 -9
\pinlabel (b1) at 242 -9
\pinlabel (b2) at 324 -9
\vspace{0.1in}
\endlabellist
\vspace{0.1in}
\includegraphics[width=120mm]{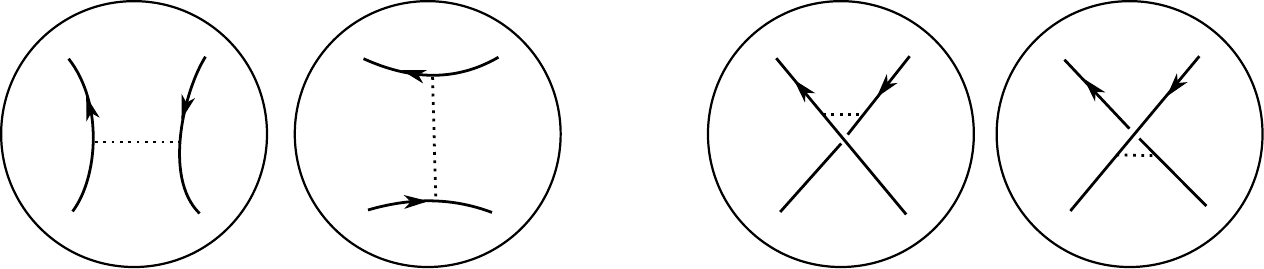}
\caption{Local stable equivalences.}
\label{f:crossings}
\end{figure}

Suppose we are given an $S^1 \times D^3$ in our $4$-manifold that intersects a surface in a pair of annuli. Here we will denote these annuli by arcs in $D^3$, where the $S^1$ factor will always be implied, as in Figure \ref{f:crossings}. A dotted line between the two components will specify a handle as follows: the dotted line represents a cord in $pt \times D^3$ which is the core of the handle. If the surface is orientable, we will assume that the handle is untwisted, i.e. it respects the orientation. In the case that the surface is non-orientable, a dotted line corresponds to attaching a handle oriented with respect to some local orientation.

We will use the following proposition repeatedly:

\begin{prop}\label{p:mainProp}
The surface given in Figure~\ref{f:crossings} (a1), a pair of annuli with a handle attached as shown, is smoothly isotopic to the surface given in (a2), relative to the boundary. Similarly, the surfaces given in (b1) and (b2) are isotopic. 
%The same four surfaces are equivalent if the handle does not respect the orientation, or if we change the orientation of one of the compoents in each picture. 
\end{prop}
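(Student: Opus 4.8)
The plan is to work entirely inside the piece $S^1 \times D^3$ and to reduce the proposition to a purely $3$-dimensional statement about the arcs in $D^3$ together with a framing, which is then crossed with $S^1$. Observe first that the two annuli in $S^1 \times D^3$, being drawn as two disjoint arcs in $D^3$ with the $S^1$ factor implied, are (up to isotopy rel boundary) determined by those arcs; and attaching the handle along the dotted cord $c \subset pt \times D^3$ produces a new surface which, by the discussion preceding Lemma~\ref{l:mainLemma} (following Boyle \cite{B} and Kamada \cite{K2}), depends only on the homotopy class rel endpoints of $c$ in the complement of the annuli, together with a framing along $c$. So the entire problem is to exhibit, in each of the two cases, an ambient isotopy of $D^3$ (or a homotopy of the cord through the allowed moves of Lemma~\ref{l:mainLemma}) carrying the configuration of (a1) to that of (a2), respectively (b1) to (b2), and then to note that crossing with $S^1$ turns this into the desired isotopy of surfaces in $S^1 \times D^3$ rel boundary.

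Concretely, I would first describe the pictures in Figure~\ref{f:crossings} precisely: in (a1) the two arcs are unknotted and unlinked (or cross once in a prescribed way), and the cord runs ``straight across'' between them; in (a2) the cord instead runs around one of the arcs, the difference between the two pictures being exactly one meridian of an annulus added to the cord, or one full finger-slide of an endpoint of the cord along one of the annuli. The key step is then to invoke the two elementary operations on cords from the proof of Lemma~\ref{l:mainLemma}: (1) adding a meridian of the surface to the cord, and (2) pre- or post-composing with (the push-off of) a loop in the surface. Each of these operations is realized by an isotopy of the stabilized surface. Thus, as soon as I check that the cords of (a1) and (a2) differ by precisely such a sequence of moves — and that the framing is respected, i.e. in the orientable case both are untwisted, and in the non-orientable case both are compatible with a fixed local orientation — the two stabilized surfaces are isotopic rel boundary. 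The case (b1)$\leftrightarrow$(b2) is handled identically, with the appropriate bookkeeping of which annulus the endpoint is slid along and how the crossing between the arcs interacts with the slide.

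The main obstacle I anticipate is the framing/orientation bookkeeping rather than the homotopy of the cord itself. When one slides an endpoint of the cord along an annulus, or pushes it around a loop, the framing along the cord can change; in the orientable setting one must verify that the net effect leaves the handle untwisted (so that one genuinely stays within the class of untwisted stabilizations claimed in the proposition), and in the non-orientable setting one must track local orientations carefully, using Lemma~\ref{l:mainLemma2} if an orientation-reversing loop is crossed. A secondary, more routine point is to confirm that all the isotopies can be taken rel $\partial(S^1 \times D^3)$, i.e. supported in the interior; this is immediate once the cord-homotopies and finger-slides are arranged to happen in a slightly smaller $S^1 \times D^3$, which is always possible. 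Granting these checks, the proposition follows by drawing the explicit sequence of pictures interpolating (a1) to (a2) and (b1) to (b2).
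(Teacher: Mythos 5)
Your proposal misidentifies what Proposition~\ref{p:mainProp} is actually asserting, and as a result the key step is missing. The configurations (a1) and (a2) (and likewise (b1) and (b2)) do \emph{not} differ merely in the position of the attaching cord of the handle on a fixed surface: they differ in the underlying pair of annuli themselves. As the caption of Figure~\ref{f:theswitch} indicates, (a1) is the ``horizontal'' pair of annuli (pairing the four boundary circles one way) and (a2) is the ``vertical'' pair (pairing them the other way); similarly (b1) and (b2) are the two resolutions of a crossing of the two strands. These underlying surfaces are not isotopic rel boundary before the handle is attached --- indeed the whole point of the proposition, as it is used throughout Section~3, is that attaching a tube between the two sheets permits a crossing change. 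Consequently your reduction to Boyle's cord moves (the two operations in the proof of Lemma~\ref{l:mainLemma}) cannot work: those moves classify handles attached to a \emph{single} fixed surface, whereas here one must produce an isotopy between handle-stabilizations of two \emph{different} surfaces.

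The missing idea is the explicit four-dimensional isotopy that the paper supplies: one enlarges the handle joining the two annuli and sweeps it all the way around the $S^1$ direction, so that the two annuli fuse and can be re-split with the opposite pairing (equivalently, one sheet is passed through the other via the tube). This genuinely uses the $S^1$ factor and the ambient fourth dimension; no sequence of cord homotopies or finger moves on a fixed surface can substitute for it. The (b1)--(b2) case then follows by applying the (a1)--(a2) equivalence locally near the crossing. Your secondary concerns about framings and working rel $\partial(S^1\times D^3)$ are legitimate bookkeeping points, but they are downstream of the main construction, which your proposal does not contain.
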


It is moreover true that the surfaces in Figure \ref{f:crossings} (a1) and (a2) are isotopic to those in (b1) and (b2) if we take the handles in (a1) and (a2) to be attached with the opposite orientation as the handles in (b1) and (b2).

\begin{proof}
Proofs of these claims are given in Figures~\ref{f:theswitch} and \ref{f:crossings2}. 

\begin{figure}[htbp]
\includegraphics[height=100mm]{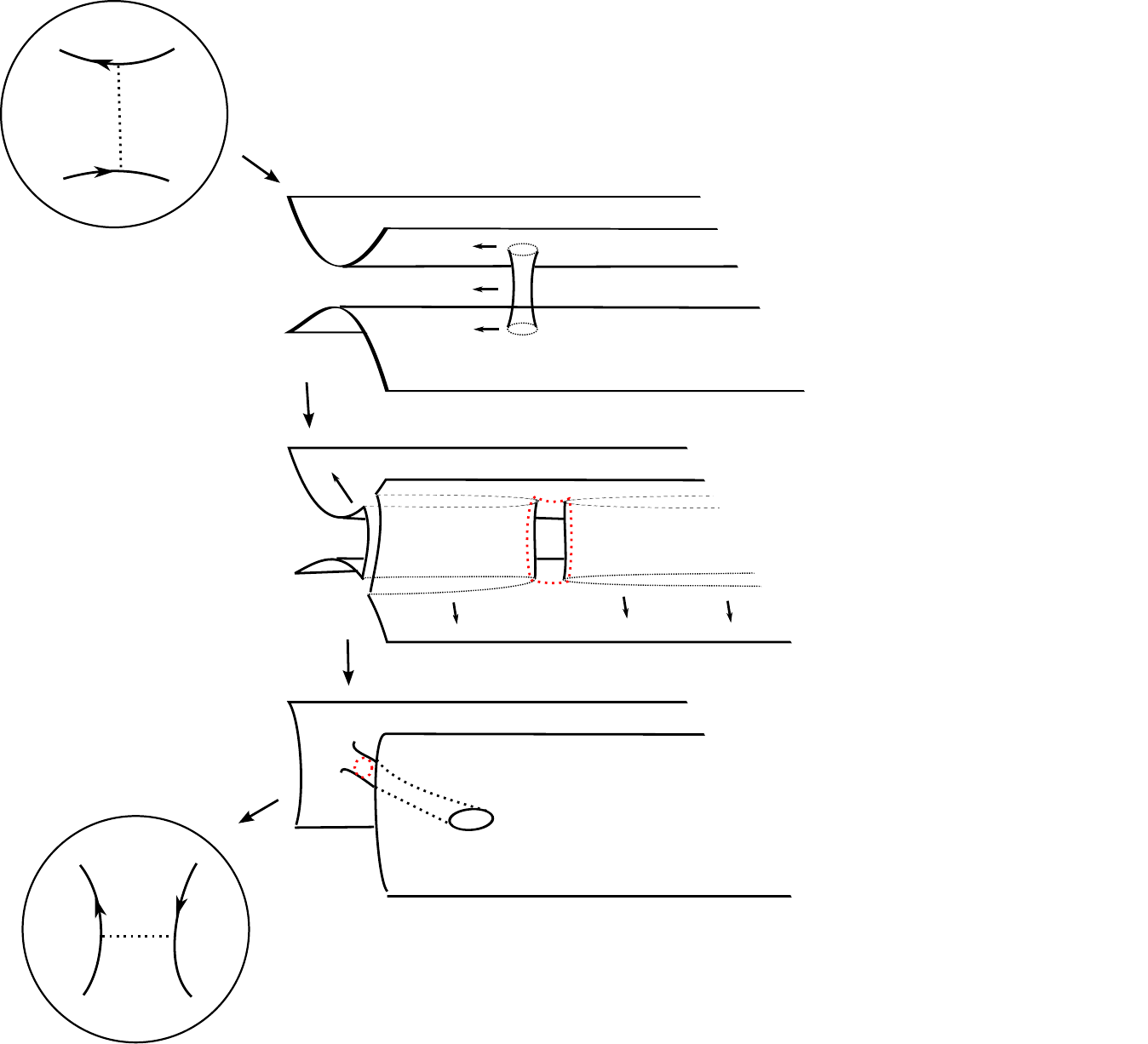}
\vspace{0.2in}
\caption{An explicit isotopy rel $\partial$ showing the equivalence of the horizontal annuli plus handle with the vertical annuli plus handle.}
\label{f:theswitch}
\end{figure}

To see the equivalence of (a1) and (a2), one can imagine making the handle between the two sheets seen in Figure~\ref{f:theswitch} larger and larger, pushing it all the way around in the $S^1$ direction

\begin{figure}[htbp]
\vspace{0.2in}
\includegraphics[height=60mm]{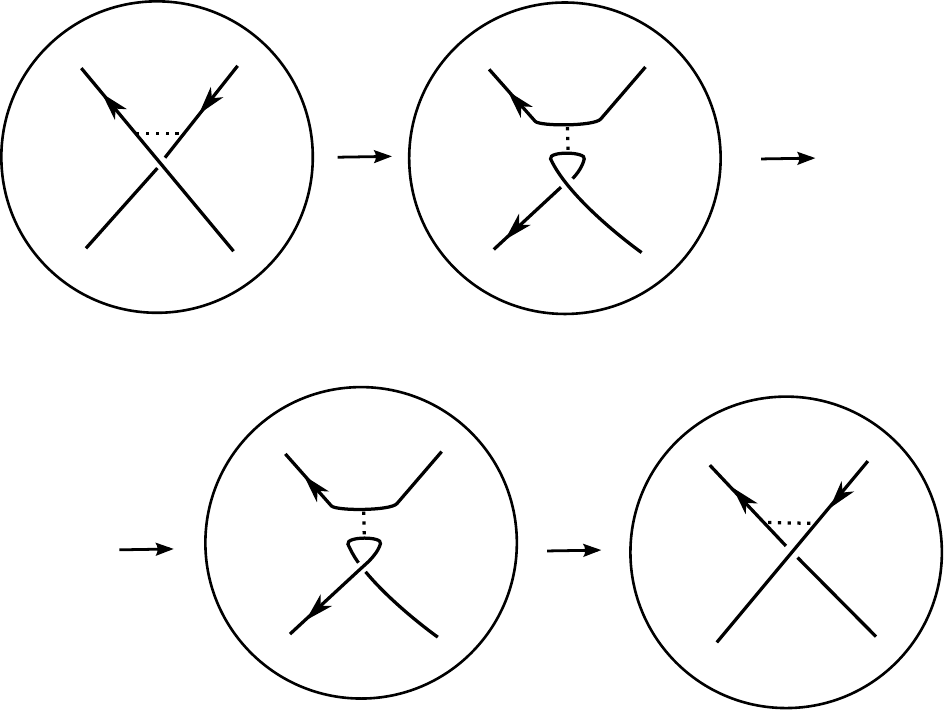}
\vspace{0.2in}
\caption{The equivalence of (b1) and (b2) by appealing locally to the the equivalence of (a1) and (a2) for the surfaces in Figure~\ref{f:crossings} }
\label{f:crossings2}
\end{figure}
\vspace{0.2in}

On the other hand the equivalence of the surfaces in Figure \ref{f:crossings} (b1) and \ref{f:crossings}(b2) can be seen by applying the (a1)-(a2) equivalence locally, as demonstrated in Figure~\ref{f:crossings2}.
\end{proof}

%

%\vspace{0.1in}
%=============================================================================
\subsection{Rim surgery} \
% =============================================================================
 
In their formative work on exotic surfaces, Fintushel and Stern introduced the rim surgery technique, which allows one to construct infinitely many exotic knottings of an orientable surface $F$ of positive genus and non-negative self intersection in a simply-connected $4$-manifold $X$, provided $(X, F)$ has non-trivial relative Seiberg-Witten invariants \cite{FintushelStern1}. (As Mark showed in \cite{Mark}, relative Heegaard-Floer invariants can be used to the same effect, and he uses this to extend the construction to surfaces with negative self intersection.) Variations on this technique have led to a myriad of examples of exotically knotted surfaces, as we will discuss shortly.

First, a quick review of the construction. Begin with an annular submanifold of an embedded surface. A neighborhood of this annulus in the ambient $4$-manifold can be viewed as $(S^1 \times D^3, S^1 \times D^1)$. Rim surgery is the process of replacing $S^1 \times D^1$ in this neighborhood with a knotted arc, $(S^1\times D^3, S^1 \x D^1 \# K)$. For example, replace the annulus represented by Figure \ref{f:rimsurgery}d with the annulus in \ref{f:rimsurgery}a. (Recall that we cross each picture with $S^1$). Using relative Seiberg-Witten invariants, Fintushel and Stern show that this construction often\footnote{It is only known that $\Sigma_{K_1}$ and $\Sigma_{K_2}$ are smoothly distinct in the case that $\Sigma$ is symplectically embedded, the annulus' core is nontrivial in $H_1(\Sigma)$, and $K_1$ and $K_2$ have different Alexander polynomials.} produces smoothly exotic embeddings.
On the other hand, by restricting themselves to surfaces whose complements have trivial fundamental group \cite{FintushelStern1}, the rim-surgered surfaces are indeed all topologically isotopic to the original surface. 

\begin{figure}[! htbp]
\labellist\hair 0pt
\pinlabel (a) at 88 117
\pinlabel (b) at 197 117
\pinlabel (c) at 144 7
\pinlabel (d) at 266 7
\pinlabel ... at 252 159
\endlabellist
\includegraphics[height=60mm]{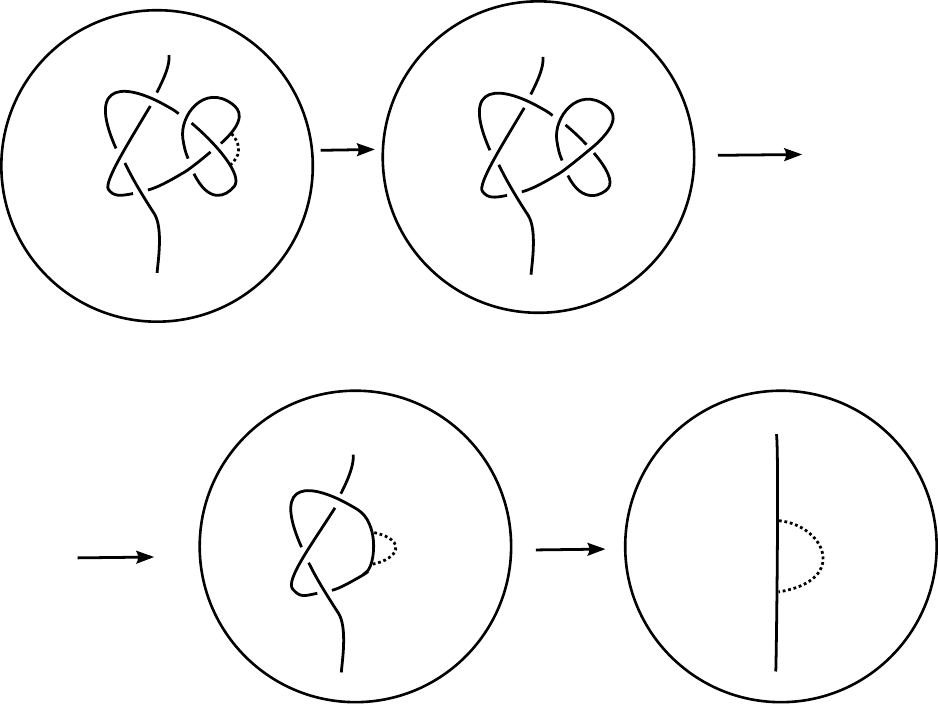}
%\vspace{0.1in}
\caption{Stabilizing rim-surgery.}
\label{f:rimsurgery}
\end{figure}

Here is how rim-surgered surfaces can be seen to become all smoothly isotopic after a single stabilization:

At a crossing of the knotted arc, add a handle as in Figure \ref{f:rimsurgery}a. By Lemma \ref{l:mainLemma} this is a trivial handle. Using Proposition \ref{p:mainProp}, we can switch the crossing. The resulting surface without the added handle must still have simply-connected complement. This is because it can be thought as a surface obtained by a similar rim-surgery along the original surface with a different knot, and all rim surgeries on a surface with simply connected complement result in a surface with simply connected complement. So, the resulting handle represented by the dotted line in Figure \ref{f:rimsurgery}b is trivial by our observations in the previous section.

This trivial handle can be re-used, sequentially applying Lemma \ref{l:mainLemma} and Proposition \ref{p:mainProp} until all that is left is an unknotted arc and a trivial handle.  In this way we get back the non-rim-surgered surface after adding a handle.

\vspace{0.1in}
% =============================================================================
\subsection{Twist rim surgery} \
% =============================================================================

\textit{Twist rim surgery} is a variation of the rim surgery technique introduced by Kim in \cite{Kim} in order to produce exotic knottings of surfaces with finite cyclic fundamental group complements (and some other finite groups in controlled settings), as further explored in the works of Kim and Ruberman \cite{KimRuberman1, KimRuberman2, KimRuberman3}.

Informally, n-twisted rim surgery amounts to performing rim surgery while the knotted arc, $D^1 \#K$, spins n-times as it goes around in the $S^1$ direction. Locally we call the complement of the surgered annulus $S^1 \tilde{\times} C(K) \subset S^1\times D^3$, and call the surgered surface itself $\Sigma_K(n) \subset X$. In the case that $\pi_1(X \setminus \Sigma) = \mathbb{Z}_d$ and $(n,d) = 1$, they prove that $n$-twist rim surgery does not change the fundamental group (i.e. $\pi_1(X \setminus \Sigma) = \pi_1(X \setminus \Sigma_K(n))$, see \cite{KimRuberman1}) and that the n-twist rim surgered surface is topologically isotopic to the original one. To show that $\Sigma$ becomes smoothly equivalent to $\Sigma_K(n)$ after a single standard stabilization, one simply proceeds as in the rim-surgery case above, noting that Proposition \ref{p:mainProp} can be applied to the spinning crossing. In this case, the added handle must be trivial by Lemma \ref{l:mainLemma}.

In the case that $\pi_1(X \setminus \Sigma)$ is non-cyclic, we must be more careful. In \cite{KimRuberman2}, they show that, they show that 1-twist rim surgery does not change the fundamental group of the surface complement (i.e.  $\pi_1(X \setminus \Sigma) = \pi_1(X \setminus \Sigma_K(1)$) and they construct concrete instances where 1-twist rim surgery yields exotic knottings. As before, we can unknot the 1-twist rim surgery crossing by crossing, but we need to check that the handle at each step (e.g. the handles from Figure \ref{f:rimsurgery}) is trivial, so that we can reposition them for subsequent steps. According to \cite[Proposition 2.3]{KimRuberman1}, the image of  $\pi_1(S^1 \tilde{\times} C(K))$ in $\pi_1(X \setminus \Sigma_K(1))$ is the cyclic subgroup generated by the meridian of $\Sigma_K(1)$. Therefore, every handle, since it is attached within $S^1\times D^3$, is homotopic to a handle attached along some number of meridians to the surface in the X, and therefore, as in the proof of Lemma \ref{l:mainLemma} such handles must in fact be trivial.  

\vspace{0.1in}
% =============================================================================
\subsection{Finashin's annulus rim surgery.} \
% =============================================================================

In \cite{Finashin1, Finashin2}, Finashin constructs exotic knottings\footnote{It is shown in \cite{KimRuberman1} that most of these smoothly knotted surfaces are topologically isotopic, thus exotically knotted.} of algebraic curves by performing the local modification of replacing the double annulus in Figure \ref{f:finalg}e with Figure \ref{f:finalg}a, which can be thought of as performing rim surgery along these two annuli simultaneously. To unknot this after a single stabilization, one adds handle as in Figure \ref{f:finalg}a, which is trivial by Lemma \ref{l:mainLemma}. Apply Proposition \ref{p:mainProp} (Figure \ref{f:finalg}b), followed by an isotopy of the annuli (Figure \ref{f:finalg}c). The additional handle, dragged along during this isotopy becomes knotted in the $3$-ball  (Figure \ref{f:finalg}c), but it can be unknotted in the $4$-manifold  (Figure \ref{f:finalg}c--d). 

\begin{figure}[htbp]
\labellist\hair 0pt
\pinlabel (a) at 87 117
\pinlabel (b) at 196 117
\pinlabel (c) at 310 117
\pinlabel (d) at 223 7
\pinlabel (e) at 346 7
\endlabellist
\includegraphics[height=60mm]{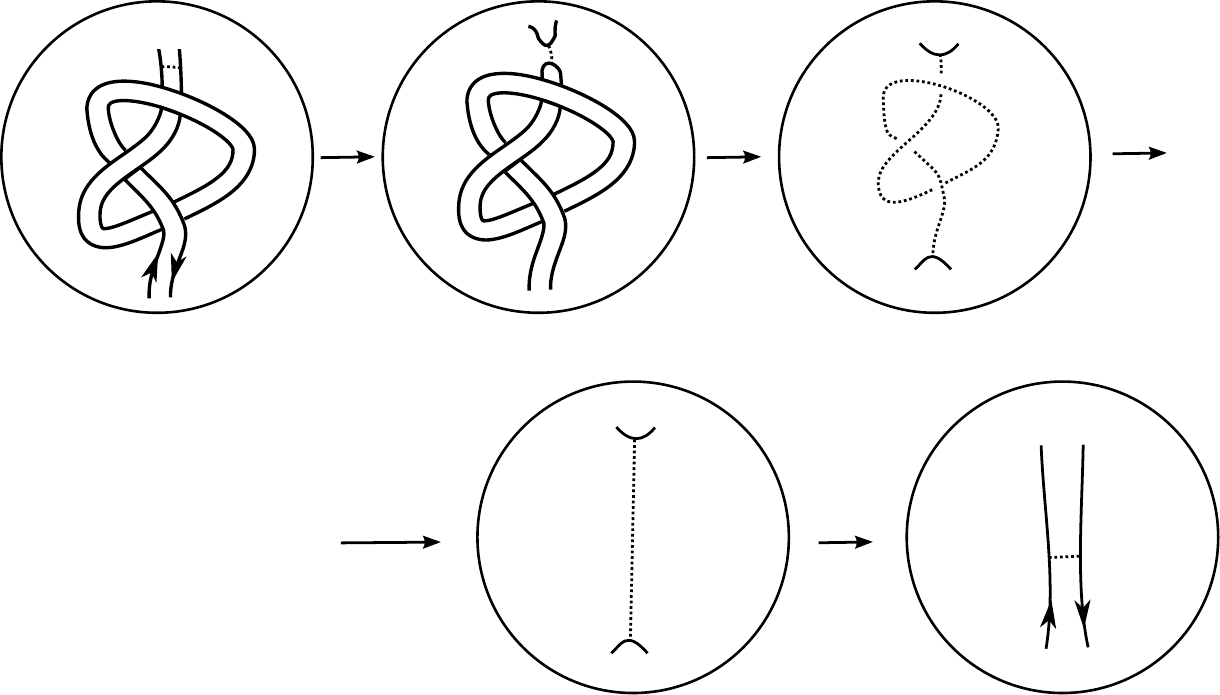}
\vspace{0.1in}
\caption{Stabilizing Finashin's algebraic curves.}\label{f:finalg}
\end{figure}

Finally,  a second application of Proposition \ref{p:mainProp} (Figure \ref{f:finalg}d), gives back the original surface with a trivial handle attached. So the knotted surface and the algebraic curve become smoothly isotopic after a single standard stabilization.

\vspace{0.1in}
% =============================================================================
\subsection{Finashin-Kreck-Viro tangle surgery construction} \
% =============================================================================

In their pioneering work,  Finashin, Kreck and Viro \cite{FKV} produced the first examples of infinitely many exotic knottings of surfaces in $4$-manifolds.  By performing equivariant logarithmic transforms on the elliptic surface $E(1)$, they obtained involutions on $E(1)$, whose fixed point loci descend to the quotient as exotic knottings of the standard\footnote{Viewing the standard embedding of $\RP$ in $S^4$ with Euler number $-2$ (resp. $+2$) as the image of the fixed point set under the complex involution on $\CP$ (resp. $\CPb$), the standard embedding of $\#10 \RP$ in $S^4$ with normal Euler number $16$ is defined as the quotient of the induced involution on the connected sum $E(1)= \CP \# 9 \CPb$. The surgery in $S^4$ we will discuss here descends from the equivariant surgery on $E(1)$.}  $\#10 \RP$ in $S^4$ with normal Euler number $16$.

These knotted surfaces can be constructed locally by the following \textit{tangle surgery} on the standard surface in $S^4$: Locally replace Figure \ref{f:fvkundo}f with \ref{f:fvkundo}a, that is, replace the two annuli by the two knotted annuli, ignoring the stabilizing handles prescribed by the dotted lines for the moment. Here the number of half-twists on the right and left parts of Figure \ref{f:fvkundo}a are assumed to be relatively prime.\footnote{The authors do this only for $2$ half-twists on the left. In this case the branched cover of the surface is $E(1)_{2,q}$. Using more general $p,q$ extends the equivariant family of exotic double covers to $E(1)_{p,q}$.} 

\begin{figure}[htbp]
\labellist\hair 0pt
\pinlabel (a) at 83 120
\pinlabel (b) at 199 120
\pinlabel (c) at 314 120
\pinlabel (d) at 126 7
\pinlabel (e) at 236 7
\pinlabel (f) at 344 7
\endlabellist
\vspace{0.1in}
\includegraphics[height=65mm]{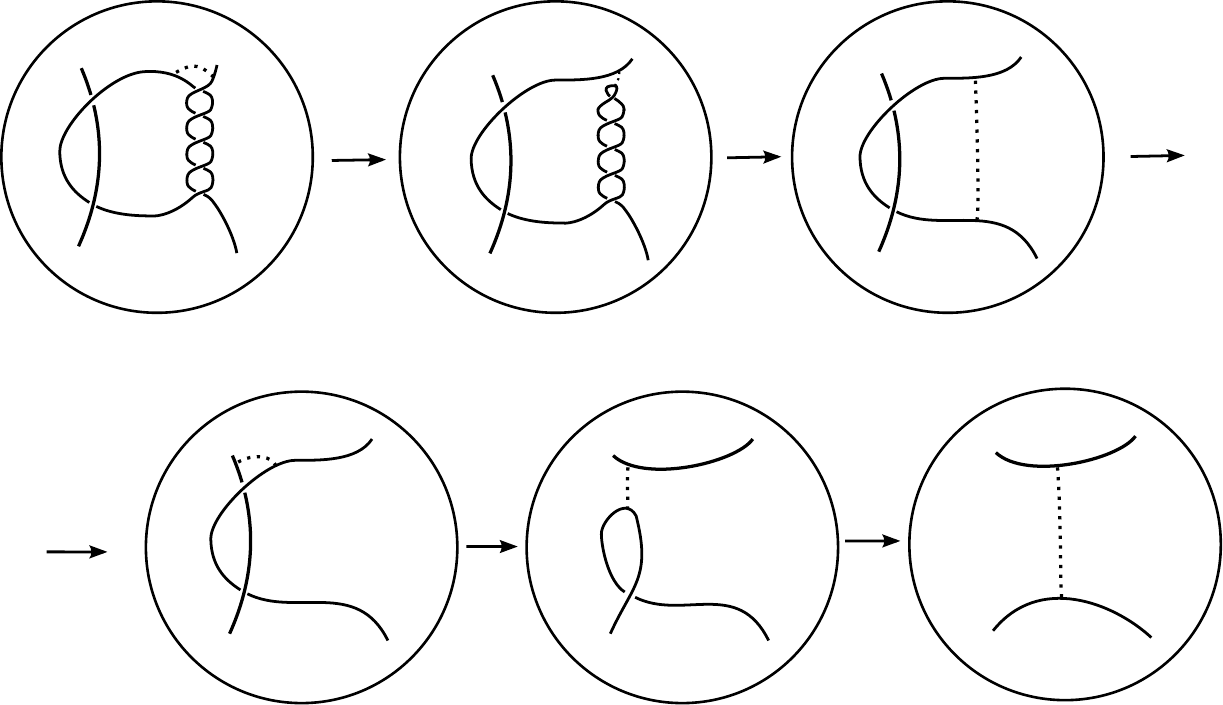}
\vspace{0.1in}
\caption{Stabilizing non-orientable surfaces of Finashin-Kreck-Viro.}\label{f:fvkundo}
\end{figure}
%\vspace{0.1in}

To see that these become equivalent after one standard stabilization, we follow the sequence given in Figure \ref{f:fvkundo}. Arguing that these stabilizations can be done by trivial handle attachments requires a closer look at the fundamental group of the surface complements in these modifications.

\vspace{0.2in}
\begin{figure}[htbp]
\labellist \small \hair 0pt
\pinlabel (a) at 100 0
\pinlabel $\tau$ at 155 50
\pinlabel (b) [Bl] at 242 0
\pinlabel u [Bl] at 276 76
\pinlabel v [Bl] at 296 76
\pinlabel x [Bl] at 273 11
\pinlabel y [Bl] at 295 11
\endlabellist
\includegraphics[height=20mm]{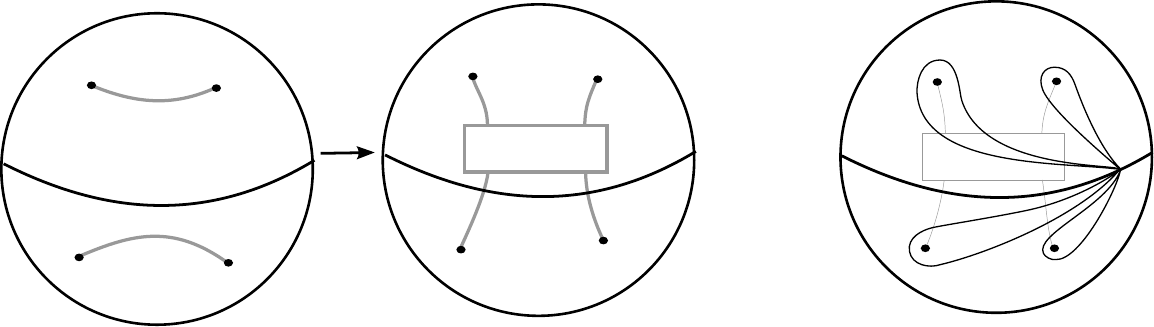}
%\vspace{0.1in}
\caption{A general tangle surgery (a), will not change the fundamental group when $\pi_1(D^3\setminus \tau)$ is abelian after adding the prescribed relations.}
\label{f:fvkundo2}
\end{figure}

It is shown in \cite{FKV} that the surface in $S^4$ corresponding to Figure \ref{f:fvkundo}a has an abelian complement, so the handle in Figure \ref{f:fvkundo}a is a trivial handle. Assume it is oriented according to some local orientation (a local orientation that, in this case, we cannot extend even to the surface depicted). Each subsequent step is either a consequence of Proposition \ref{p:mainProp} or Lemma \ref{l:mainLemma2}. The reason we can use Lemma \ref{l:mainLemma2} to reposition the handle at will, and not need to worry about the local orientation, is as follows: According to \cite[Section 3.3]{FKV}, if the complement of the original surface has cyclic fundamental group, the fundamental group of the complement will still be cyclic after the arbitrary tangle surgery depicted in Figure \ref{f:fvkundo2}a as long as $\pi_1(D^3\setminus \tau)$ becomes abelian after adding the relations $x = y^{\alpha}= u^{\beta} = v^{\gamma}$ with $\alpha, \beta,\gamma = \pm 1$, where the curves $x,y,u$ and $v$ are in $\partial D^3$ as in Figure \ref{f:fvkundo2}b. This is clearly true for $\tau$ as in Figures \ref{f:fvkundo}c-f. Therefore, by Lemma \ref{l:mainLemma2}, the handles attached at those stages are trivial handles.

%\vspace{0.1in}
% =============================================================================
\subsection{Equivariant knot surgery} \
% =============================================================================

In \cite{Finashin3}, Finashin gives a construction of exotic knottings of a ``smaller'' non-orientable surface, $\# 6 RP^2$  in $S^4$ with normal Euler number $8$. In the same spirit as \cite{FKV}, these examples arise as the fixed sets of involutions on exotic $\CP \# 5 \CPb$s. (These exotic manifolds come from \cite{PSS}). In his beautiful paper, Finashin describes equivariant versions of \textit{knot surgery} \cite{FSknotsurgery}, \textit{double node surgery} \cite{FSdoublenode} and the \textit{rational blow-down} \cite{FSrationalblowdown} constructions of Fintushel and Stern. 

\begin{figure}[htbp]
\labellist\hair 0pt
\pinlabel (a) at 85 114
\pinlabel (b) at 196 114
\pinlabel (c) at 127 7
\pinlabel (d) at 241 7
\endlabellist
\vspace{0.2in}
\includegraphics[height=60mm]{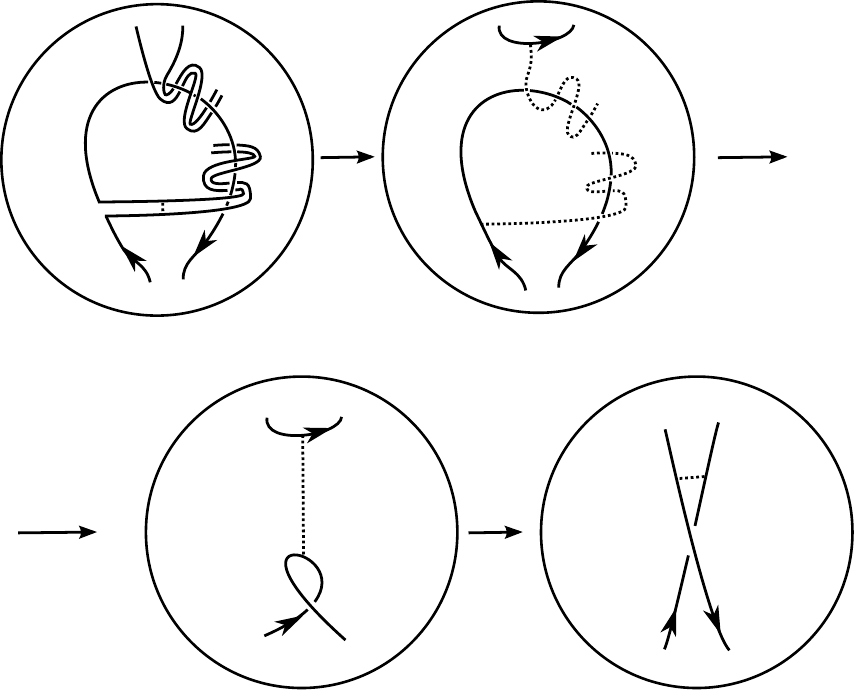}
\vspace{0.1in}
\caption{Stabilizing Finashin's equivariant knot surgery.} \label{f:finknot}
\end{figure}

Here the equivariant knot surgery -- performed in a double node neighborhood -- alters the surface in $S^4$ by exchanging Figure~\ref{f:finknot}d for Figure~\ref{f:finknot}a. (See \cite[Figure 6e]{Finashin3}). The stabilized surface is unraveled using Proposition \ref{p:mainProp} followed by an isotopy, followed by the Proposition~\ref{p:mainProp}  again. The handles are all trivial for similar reasons as before. 

%\footnote{As The equivariant rational-blowdown, on the other hand, is not in any obvious way easily unknotted.

\vspace{0.1in}
% =============================================================================
\subsection{Null-homologous exotic tori} \
% =============================================================================

In \cite{HS}, Hoffman and the second author of the current article showed that certain null-homologous tori arising in the knot surgery construction of Fintushel and Stern are topologically unknotted (i.e. bound a topologically embedded $S^1\times D^2$), but are smoothly knotted.  These null-homologous tori are found in a neighborhood of a homologically-essential torus, along which the knot surgery is performed. An example of one such exotically embedded trivial torus, and the steps for showing that it becomes smoothly trivial after a single stabilization, are given in Figure \ref{f:neiltorus}. 

\begin{figure}[htbp]
\labellist\hair 0pt
\pinlabel (a) at 94 129
\pinlabel (b) at 215 129
\pinlabel (c) at 331 129
\pinlabel (d) at 115 20
\pinlabel (e) at 236 20
\pinlabel (f) at 358 20
\endlabellist
\vspace{0.2in}
\includegraphics[height=65mm]{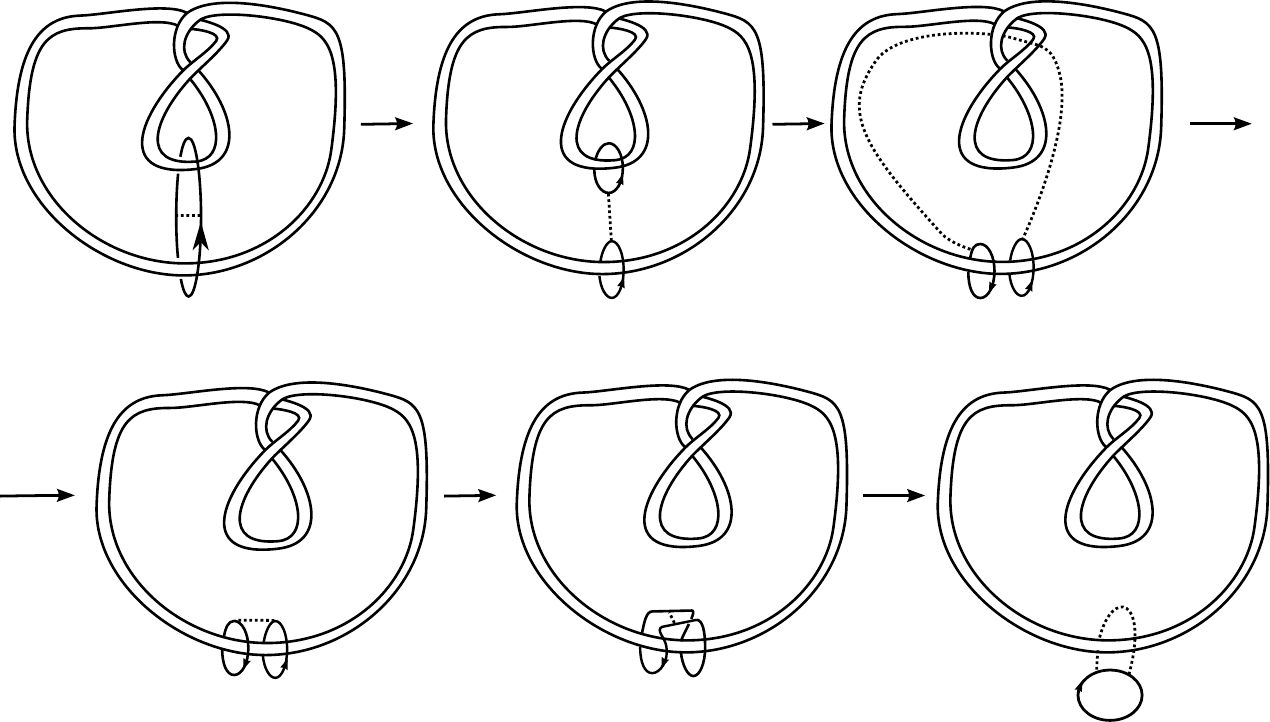}
\caption{Stabilizing null-homologous tori.} \label{f:neiltorus}
\end{figure}
\vspace{0.2in}

A few more details about the construction are as follows. Let the $4$-manifold $X$ contain a square zero embedded torus $T$ with $\pi_1(X\setminus T) = 1$.  The  neighborhood of $T$, can be realized as $S^1 \times (S^3\setminus \, U)$, where $U$ is a neighborhood of the unknot. Figure \ref{f:neiltorus}a represents this neighborhood where we have supressed the $S^1$ factor. Specifically, we have depicted $S^3\setminus U$. The solid $S^1$ in the figure will correspond (after crossing with $S^1$) to a torus $\Sigma$  in the neighborhood of $T$. This $\Sigma$ is the exotically embedded torus in $X$ that we were looking for. We will show becomes smoothly trivial after a single stabilization. An infinite family of exotically embedded tori are constructed similarly in \cite{HS} and they can all be stabilized by following exactly the same pattern below.

The only feature of Figure \ref{f:neiltorus} that deserves further explanation is why the stabilizing handles represented by the dotted line in Figure \ref{f:neiltorus}c can be moved to it location in Figure \ref{f:neiltorus}d. This has to do with the fact that $\pi_1(X\setminus T) = 1$: the meridian of the unknot in Figure \ref{f:neiltorus}c is just the meridian of $T$ in $X$, and this meridian is null, homotopic in $X\setminus \nu T$. Therefore the arc denoting the handle can be moved ``across'' the complement of the unknot. Similarly, the handle in Figure \ref{f:neiltorus}f is trivial.

\begin{remark}
[Tori that bound $S^1 \times $ (punctured torus)] This example can be generalized as follows. Suppose $\Sigma$ is a null homologous torus. One stabilization is enough to make $\Sigma$ a trivially embedded surface (i.e. bounding a solid handlebody) under the following circumstances: (1) We have that $\Sigma$ bounds $S^1 \times T^{\circ}$, where $T^{\circ}$ is a punctured torus or Klein bottle; and (2) There exists a primitive loop in $pt \times T^{\circ}$ that bounds an immersed disk $D$ which intersects $S^1 \times T^{\circ}$ only along its boundary. When $T$ is a torus, we do an untwisted  stabilization, and when $T$ is a Klein bottle, a twisted one. This is a straightforward generalization of the steps in Figure \ref{f:neiltorus}.
\end{remark}

\vspace{0.1in}
% =============================================================================
\subsection{Knotted spheres of Auckly-Kim-Melvin-Ruberman} \
% =============================================================================

In \cite{AKMR}, Auckly, Kim, Melvin and Ruberman construct exotic surfaces through a method that does not follow the pattern above, i.e. their knotted surfaces do not arise via a $3$-dimensional surgery crossed with $S^1$. Their exotic surfaces arise as two different embeddings of the $2$-sphere $\mathbb{CP}^1 \subset X \# \CP$ for an almost completely decomposable $4$-manifold $X$. However, we can still show explicitly that these exotic surfaces become smoothly isotopic after adding a trivial handle. This is very different than the stabilization studied in \cite{AKMR}, where they show that the surfaces become smoothly isotopic in $X \# \CP \# S^2\times S^2$.

\begin{figure}[htbp]
\labellist\hair 0pt
\pinlabel $h-\frac{1}{2}$ at 74 73
\pinlabel $h-\frac{1}{2}$ at 71 16
\pinlabel $h-\frac{1}{2}$ at 263 71
\pinlabel $h-\frac{1}{2}$ at 263 14
\pinlabel (a) at 135 10
\pinlabel (b) at 326 10
\endlabellist
\vspace{0.2in}
\includegraphics{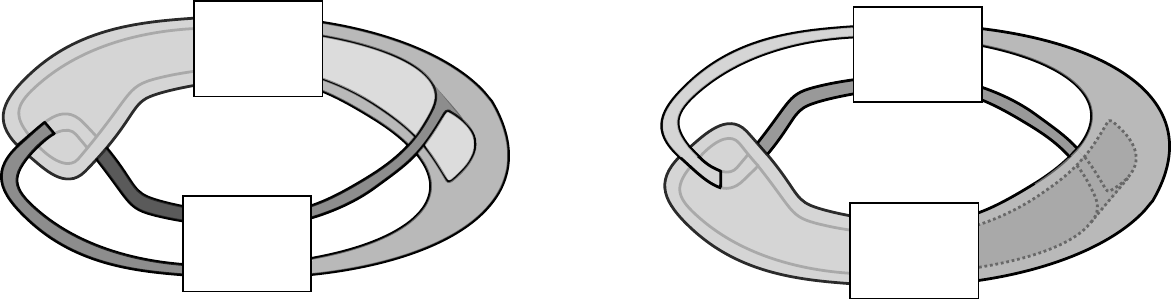}
\vspace{0.2in}
\caption{Pairs of exotic spheres in a small compact $4$-manifold. The $2$-handles are attached along the knot with $+1$ framing, and the boxes represent $h-\frac{1}{2}$ twists.}\label{f:stabruberman}
\end{figure}

\begin{figure}[htbp]
\labellist\hair 0pt
\pinlabel $h-\frac{1}{2}$ at 73 170
\pinlabel $h-\frac{1}{2}$ at 71 113
\pinlabel $h-\frac{1}{2}$ at 252 169
\pinlabel $h-\frac{1}{2}$ at 250 113
\pinlabel $h-\frac{1}{2}$ at 158 68
\pinlabel $h-\frac{1}{2}$ at 157 13
\pinlabel (a) at 135 105
\pinlabel (b) at 314 105
\pinlabel (c) at 221 8
\endlabellist
\vspace{0.2in}
\includegraphics{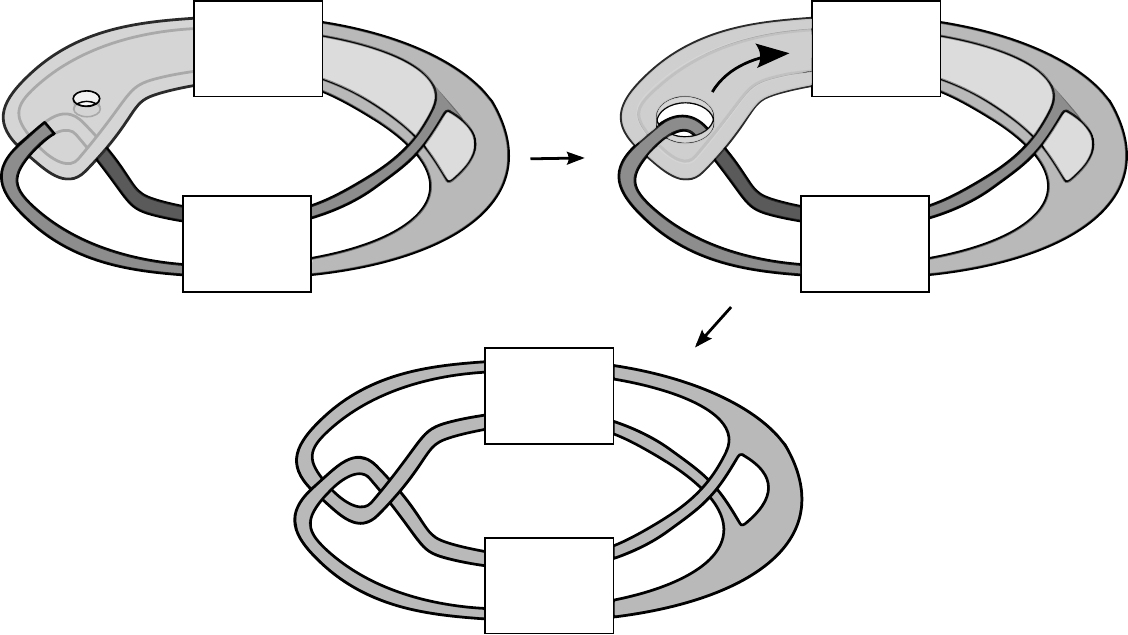}
\vspace{0.2in}
\caption{Stabilizing the spheres of Auckly-Kim-Melvin-Ruberman.}\label{f:stabruberman2}
\end{figure}

This construction can be considerably localized: The authors show that their surfaces are exotically embedded in a simple compact $4$-manifold composed of a \linebreak $0$-handle a $2$-handle. These compact $4$-manifolds, parametrized by $h$, are depicted in Figure \ref{f:stabruberman}, as are the pair of exotically embedded surfaces. One should understand the surfaces as being made up of the core of the 2-handle plus the depicted ribbon disk (i.e. push the depicted ribbon singularities down into the 0-handle).

These surfaces become isotopic after adding a trivial handle as illustrated in Figure \ref{f:stabruberman2}. One can see this handle explicitly as the tube added in Figure \ref{f:stabruberman2}(a) between two parts of the ribbon disk. The handle is trivial by Lemma \ref{l:mainLemma} because the surface has simply-connected complement. To pass from Figure \ref{f:stabruberman2}(a) to \ref{f:stabruberman2}(b), push the handle down into the 4-ball, move it over, and then bring it back up around the ribbon band. A further isotopy in the boundary of the 0-handle takes us from (b) to (c). Performing a similar stabilization on the surface in Figure \ref{f:stabruberman}(b) has the same result, and consequently the surfaces in Figures \ref{f:stabruberman}(a) and (b) become smoothly isotopic after adding a single trivial handle.

\smallskip
Our arguments can certainly be applied to other possible families of exotic knottings arising from exotic ACD $4$-manifolds containing similar submanifolds as the one in Figure~\ref{f:stabruberman}.

%\newpage
\vspace{0.3in}
%==============================================================================

\end{document}